\numberwithin{equation}{section}
\newtheorem{theorem}{Theorem}[section]
\newtheorem{lemma}[theorem]{Lemma}
\theoremstyle{definition}
\theoremstyle{remark}
\newtheorem{remark}[theorem]{Remark}
\DeclareMathOperator{\IM}{Im}
\DeclareMathOperator{\RE}{Re}
\DeclareMathOperator{\Jac}{Jac}
\begin{document}
	\title[On the solution of a conformal mapping problem]{On the solution of a conformal mapping problem by means of Weierstrass functions}
	
	\author{Smirnov Matvey}
	\address{119991 Russia, Moscow GSP-1, ul. Gubkina 8,
		Institute for Numerical Mathematics,
		Russian Academy of Sciences}
	\email{matsmir98@gmail.com}
	
	\begin{abstract}
		 The conformal mapping problem for the section of a channel filled with porous material under a rectangular dam onto the upper half-plane is considered. Similar problems arise in computing of fluid flow in hydraulic structures. As a solution method, the representation of Christoffel--Schwartz elliptic integral in terms of Weierstrass functions is used. The calculation is based on Taylor series for the sigma function, the coefficients of which are determined recursively. A simple formula for a conformal mapping is obtained, which depends on four parameters and uses the sigma function. A numerical experiment was carried out for a specific area. The degeneration of the region, which consists in the dam width tending to zero, is considered, and it is shown that the resulting formula has a limit that implements the solution of the limiting problem. A refined proof of Weierstrass recursive formula for the coefficients of Taylor series of the sigma function is presented.
		 
		\smallskip
		\noindent \textbf{Keywords.} Conformal mappings, Christoffel-Schwartz integral, elliptic funtions, Weierstrass sigma function, degeneration of Weierstrass functions. 
	\end{abstract}
	\maketitle
	
	\section{Introduction}
	A region $\Omega \subset \mathbb C$, the boundary of which is a polygonal curve with angles multiple of $\pi/2$, is considered in this article. This region models the shape of a channel under a dam. The calculation of fluid flow in such channel boils down to a conformal mapping problem of $\Omega$ onto the upper half-plane. The solution of such problems is given by Christoffel-Schwartz integral (see, e.g., \cite{ShabatEng} or \cite{SchwartzChristoffel}), which in this case is naturally defined on an elliptic Riemann surface. In this paper the simple formula, which writes integral in terms of Weierstrass sigma function (see e.g., \cite{AkhiezerEng} or \cite{Chandra}), was found. This approach allows to avoid numerical integration and the mapping parameters can be found from a simple nonlinear system of equations; therefore, the computation is significantly simplified.
	
	Similar problems have been considered in \cite{SigBog}, \cite{DamBogEng}, \cite{BogHept}, \cite{BogGrig}, and \cite{BogCond}, where Christoffel-Schwartz integral was efficiently represented by theta functions (see, e.g., \cite{FarkasKra}). In the paper \cite{BogCond} the application of Lauricella functions to these problems was studied, and also different approaches were compared.
	
	The main advantage of Weierstrass functions over theta functions is that they have limiting values when the surface degenerates. The paper analyzes behavior of the constructed conformal mapping under the condition that the dam width tends to zero. It turns out that conformal mappings have a limit, which is a solution to the limiting problem. Thus, it is shown that the solution is stable under the considered degeneration.
	
	The described property of the Weierstrass sigma function loses its value if the standard method is used for calculations, which expresses the sigma function in terms of the theta function (since it does not withstand degeneration). Thus, it is necessary to use an independent method for calculating sigma functions. In this paper, we use the expression for the coefficients of its expansion into Taylor series obtained by Weierstrass (see \cite{Weier}). Since the proof presented there is apparently not complete (at one point, the analyticity of the sigma function in three variables in a neighborhood of zero is used, which is not obvious), we give a more detailed proof in the appendix. The above formula, however, is not sufficient for the final numerical solution, since Taylor series are not suitable for calculations with large arguments (namely, such a need arises with degeneration). Thus, the problem of constructing an efficient computational method for the sigma function independent of theta functions still remains unsolved. If such a method is available, it will be possible to construct formulas that are stable under various degenerations and use them in calculations. The results of this work illustrate the need in such methods.
	
	Problems in which hyperelliptic Riemann surfaces of higher genus arise can also be solved using the theory of sigma functions developed by Klein and Baker in \cite{Klein} and \cite{Baker} respectively (more detailed exposition can be found in \cite{Buch}). There is hope that it will be possible to prove the stability of formulas expressing the solution of the above problems in terms of high-order sigma functions. Thus, the construction of Weierstrass-type recurrent formulas (which are known for genus 1 and 2; see \cite{Buch}) and calculation methods for sigma functions can be extremely useful in applied problems.
	
	\section{The statement and the origin of the problem}
	Consider region $\Omega$ in the complex plane pictured on Figure 1. 
	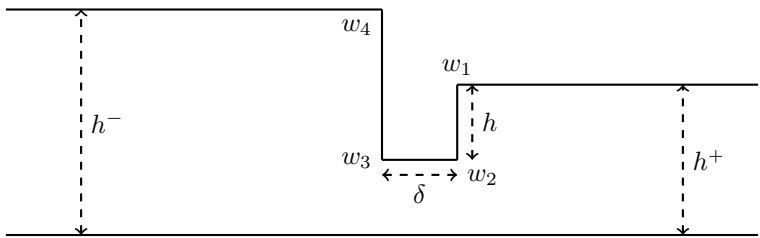
\begin{figure}[h!]
		\centering
		\begin{tikzpicture}
			\draw[black,thick] (-5,-1) -- (5,-1);
			\draw[black,thick] (-5,2) -- (0,2) node [pos=1,below left] {$w_4$};
			\draw[black,thick] (0,2) -- (0,0) node [pos=1,left] {$w_3$};
			\draw[black,thick] (0,0) -- (1,0) node [pos=1,below right] {$w_2$};
			\draw[black,thick] (1,0) -- (1,1) node [pos=1,above] {$w_1$};
			\draw[black,thick] (1,1) -- (5,1);
			\draw[<->,black,dashed,thick] (0,-0.2) -- (1,-0.2) node [pos=0.5,below] {$\delta$};
			\draw[<->,black,dashed,thick] (1.2,0) -- (1.2,1)  node [pos=0.5,right] {$h$};
			\draw[<->,black,dashed,thick] (-4,-1) -- (-4,2) node [pos=0.5,right] {$h^-$};
			\draw[<->,black,dashed,thick] (4,-1) -- (4,1) node [pos=0.5,right] {$h^+$};
		\end{tikzpicture}
		
		\caption{Region $\Omega$.}
	\end{figure}
	It is bounded from below by a line and from above by a polygonal curve with four vertices $w_1,w_2,w_3,w_4$ (it is convenient to think that this region also has two vertices at $\pm \infty$). Let the line, which bounds the region from below be parallel to the real axis, and let the vertex $w_4$ be at the origin. Then, this region is determined by four real parameters $h^-,h^+,h,\delta$, where $h$ is equal to the length of segment $[w_1,w_2]$, $\delta$ is the length of $[w_2,w_3]$, and $h^-$ and $h^+$ are equal to the distance from the line that bounds the region from below to $w_4$ and $w_1$ respectively. These parameters are positive and satisfy inequalities $h^- - h^+ + h > 0$, which corresponds to positivity of the length of $[w_3,w_4]$, and $h < h^+$. The region is determined uniquely by these parameters.

	Regions similar to $\Omega$ arise in problems connected with the computation of fluid flow through the porous material under a dam. Since the flow is continuous and satisfies the Darcy's law, the pressure $p$ is a harmonic function in $\Omega$. Assuming that segments $[w_1, w_2]$, $[w_2,w_3]$, $[w_3, w_4]$, and channel's bottom are impenetrable, we obtain natural boundary conditions: the normal derivative $\partial p/\partial n$ vanishes on impenetrable segments of the boundary, while on the remaining segments (i.e. on the half-lines starting from $w_1$ and $w_4$) $p$ is locally constant.

	Consider a real-valued function $q$ in the region $\Omega$ such that $f = p + iq$ is holomorphic (such function exists because $\Omega$ is simply connected). The normal derivative of $p$ vanishing condition is easily equivalent to constancy of $q$ on the corresponding boundary segment. It follows, that, if $f$ is a function that conformally maps $\Omega$ onto a rectangle in a way, such that $w_1$, $w_4$, and vertices at infinity are mapped to the vertices of a rectangle, then $p = \RE f$ is a solution to the original problem. $q = \IM f$ is called the current function. Its level lines are the streamlines of the fluid under the dam. It is clear, that it is enough to solve the problem of conformal mapping of $\Omega$ onto upper half-plane $\mathbb C_+$ in order to solve the specified problem.

	In what follows, the conformal mapping problem will be solved explicitly using the tools of Weierstrass elliptic functions. Below we show the calculation of streamlines in $\Omega$ obtained using the method constructed in this work.
	
	\begin{figure}[h!]
		\centering
		\begin{subfigure}{.5\textwidth}
			\centering
			\includegraphics[width=.9\textwidth]{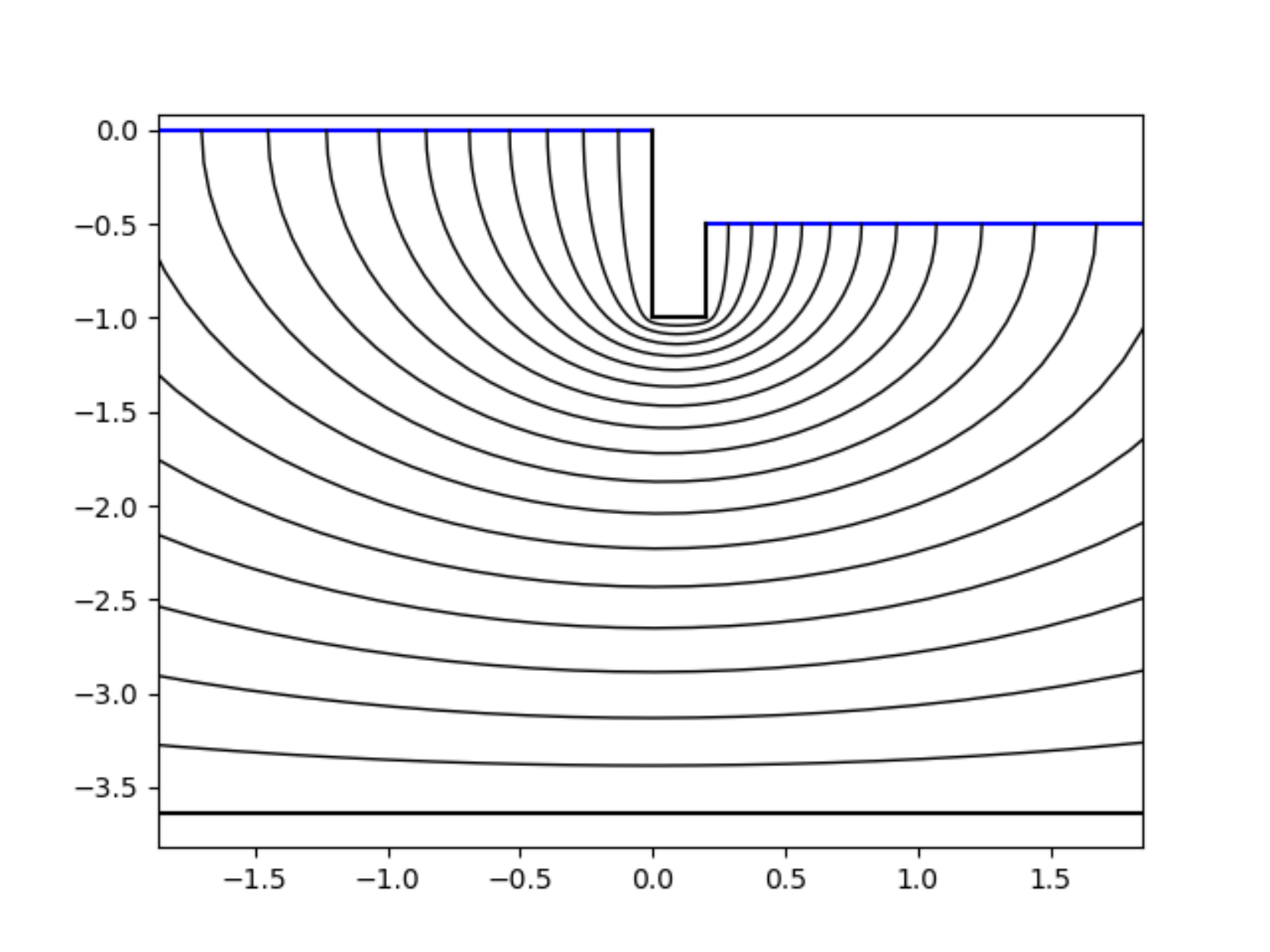}
		\end{subfigure}%
		\begin{subfigure}{.5\textwidth}
			\centering
			\includegraphics[width=.9\textwidth]{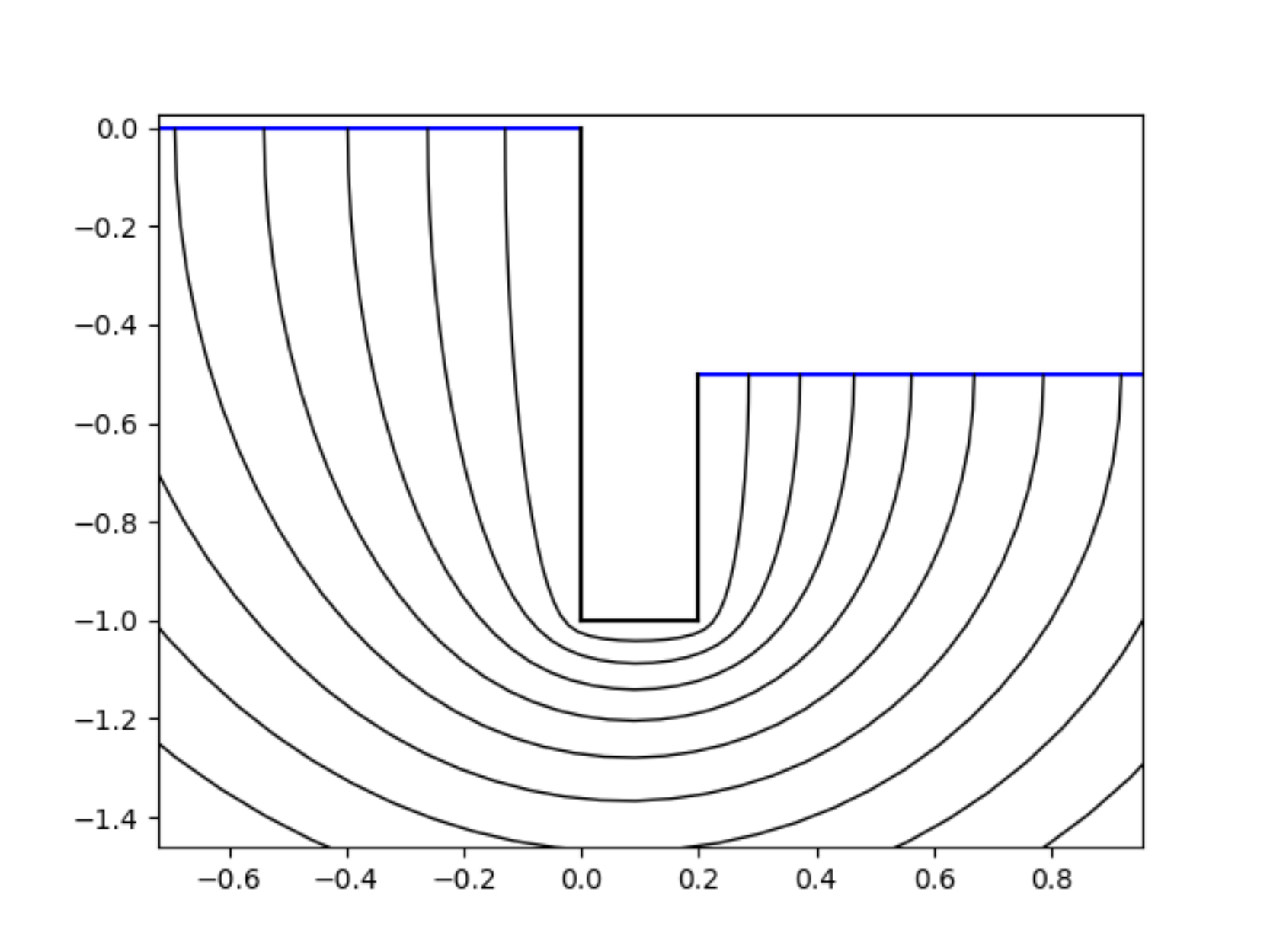}
		\end{subfigure}
		
		\caption{Streamlines in the region $\Omega$.}
	\end{figure}
	\FloatBarrier
	\section{The solution of the conformal mapping problem}
	\subsection{The general form of the solution and parameter determination}
	Since $\Omega$ is simply connected, there is a conformal mapping $W:\mathbb C_+ \rightarrow \Omega$, where $\mathbb C_+ = \{z \in \mathbb C: \IM z > 0\}$ is the upper half-plane (see, e.g., \cite{KartanEng} or \cite{ShabatEng}). Using, if necessary, a suitable automorphism of $\mathbb C_+$, one can make it such that the point $w_4$ is the preimage under $W$ (more precisely, under its continuation to the boundary) of $\infty$. Then, by Christoffel-Schwartz theorem (see \cite{SchwartzChristoffel}), there exist $x^-<x^+<x_1<x_2<x_3 \in \mathbb R$ and $C \in \mathbb C$ such that
	\begin{equation}
		dW = \phi = C \frac{\sqrt{(x - x_2)(x-x_3)}}{(x-x^-)(x-x^+)\sqrt{x-x_1}}dx.
	\end{equation}
	\begin{remark}
		Here $x_i$ is the preimage of $w_i$ under $W$, and $x^-$ and $x^+$ are the points on the boundary of the upper half-plane in which $W$ goes to infinity (preimages of the vertices at infinity).
	\end{remark}

	The differential form $\phi$ can be considered on the hyperelliptic Riemann surface $V$ of genus $1$, defined by equation $y^2 = F(x) = 4(x - x_1)(x-x_2)(x-x_3)$. Using the shift of the upper half-plane we can set $x_1 + x_2 + x_3 = 0$ without loss of generality. Thus, $F(x) = 4x^3 - g_2x - g_3$ for some real $g_2,g_3$ (that are determined by $x_1,x_2,x_3$). We can rewrite $\phi$ on this surface in the form
	\begin{equation}
		\phi = 2C \frac{(x-x_2)(x-x_3)}{y(x-x^-)(x-x^+)}dx.
	\end{equation}
	
	Let us fix the branch of $\sqrt{F(x)}$ in the region that is made from $\mathbb C$ by throwing off segment $[x_1,x_2]$ and half-line $[x_3,\infty]$. Let this branch have positive values as the argument tends to half-line $(x_3, \infty)$ from the upper half-plane. Recalling that $dx/y$ is a holomorphic (everywhere non-zero) form on $V$, we obtain that $\phi$ has two zeros of multiplicity $2$ at $(x_2,0)$ and $(x_3,0)$ and also four simple poles at $(x^-,\pm \sqrt{F(x^-)})$ and $(x^+,\pm\sqrt{F(x^+)})$. Note that the residues of this form at these poles are equal to $\pm h^-/\pi$ and $\mp h^+/\pi$ respectively.

	Now we shall use Abel map (see, e.g., \cite{ForsterEng}) which identifies $V$ with $\Jac(V)$ (as usual, we set the infinity as the initial point and $dx/y$ as the basis of holomorphic forms). Let us introduce the half-periods $$
	\omega = \int_{x_1}^{x_2} \frac{dx}{y},\;\; \omega' = -\int_{x_2}^{x_3} \frac{dx}{y},
	$$ and quantities $\eta = \zeta(\omega)$ and $\eta' = \zeta(\omega')$, where $\zeta$ is the Weierstrass zeta function (see \cite{AkhiezerEng}).
	It is easy to see that $\omega,\eta \in \mathbb R$ and $\omega', \eta' \in i\mathbb R$. The set of points $(x,\sqrt{F(x)})$, where $x \in \mathbb C_+$, is mapped by this map onto the rectangle with vertices $0,\omega', \omega' - \omega, -\omega$. Let us denote the images of the points $(x^-, \sqrt{F(x^-)})$ and $(x^+,\sqrt{F(x^+)})$ by $z^-$ and $z^+$ respectively (see Figure 3, where the preimages of points are indicated in the brackets).
	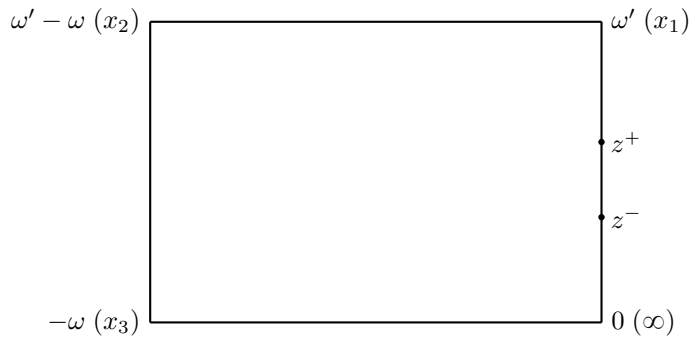
\begin{figure}[h!]
		\centering
		\begin{tikzpicture}[scale = 2]
			\draw[black,thick] (0,0) -- (0,2)node [pos=0,right] {$0\;(\infty)$};
			\draw[black,thick] (0,2) -- (-3,2)node [pos=0,right] {$\omega'\;(x_1)$};
			\draw[black,thick] (-3,2) -- (-3,0)node [pos=0,left] {$\omega' - \omega\;(x_2)$};
			\draw[black,thick] (-3,0) -- (0,0) node [pos=0,left] {$-\omega\;(x_3)$};
			\filldraw [black] (0,0.7) circle (0.5pt) node[right] {$z^-$};
			\filldraw [black] (0,1.2) circle (0.5pt) node[right] {$z^+$};
		\end{tikzpicture}
		
		\caption{Image of the upper half-plane under the Abel map.}
	\end{figure}
	The images of $(x^-, -\sqrt{F(x^-)})$ and $(x^+,-\sqrt{F(x^+)})$ in this case are equal to $-z^-$ and $-z^+$. Consider the differential form $\psi$ on the torus that corresponds to $\phi$ under this identification of $V$ with $\Jac(V)$. This form has $4$ simple poles in the points $\pm z^-$ and $\pm z^+$ and its residues are equal to $\pm h^-/\pi$ and $\mp h^+/\pi$ respectively.

	Now we use the method of representing elliptic functions by Weierstrass functions that is described in \cite{AkhiezerEng}. Consider meromorphic function
	\begin{equation}\label{eq413}
		g(z) = \frac{h^-}{\pi}(\zeta(z - z^-) - \zeta(z + z^-)) - \frac{h^+}{\pi}(\zeta(z-z^+) - \zeta(z+z^+)).
	\end{equation}
	Using the quasiperiodic properties of $\zeta$ (see \cite{AkhiezerEng}) it is easy to conclude that $g$ is elliptic. Form $g(z)dz$ has the same simple poles as $\psi$ with the same residues. Therefore, $\psi - g(z)dz$ is a holomorphic form on the torus. Since the space of holomorphic $1$-forms on the torus is one-dimensional it follows that $\psi - g(z)dz = Ddz$, where $D$ is a constant (note that $D \in i\mathbb R$).
	
	Now we return to the map $W$. It is clear that $$
	W(x) = -\int_{x}^\infty \phi.
	$$ In view of that, let \begin{equation}
		Q(z) = \int_0^z \psi.
	\end{equation}
	Obviously, $W(x)$ is equal to $Q(z)$, where $z$ is the image of $(x,\sqrt{F(x)})$ under the Abel map. Thus, $Q$ conformally maps the rectangle with vertices $0,\omega', \omega' - \omega, -\omega$ onto $\Omega$, and $\omega'$ is mapped to $w_1$, $\omega'-\omega$ is mapped to $w_2$, and $-\omega$ to $w_3$ (also $0$ is mapped to $w_4$). Now we can derive the system of equations from the previously obtained relations:
	\begin{equation}\label{eq415}
		g(-\omega) + D = 0,\;\;\; g(\omega' - \omega) + D = 0,
	\end{equation}
	\begin{equation}\label{eq416}
		Q(\omega'-\omega) - Q(\omega') = -ih,\;\;\; Q(-\omega) - Q(\omega'-\omega) = -\delta.
	\end{equation}
	\begin{remark}
		The first pair of equations follows from the fact that $\phi$ has zeros in the points $(x_2,0)$ and $(x_3,0)$, and the second pair is a consequence of relations $w_3-w_2 = -\delta$, $w_2 - w_1 = -ih$.
	\end{remark}
	It remains to derive a reasonable formula for $Q$. Recall that $\zeta$ is a logarithmic derivative of $\sigma$. It easily follows that
	\begin{equation}\label{eq417}
		Q(z) = Dz + \frac{h^-}{\pi}\ln\left(\frac{\sigma(z-z^-)}{\sigma(z+z^-)}\right) - \frac{h^+}{\pi}\ln\left(\frac{\sigma(z-z^+)}{\sigma(z+z^+)}\right) - i(h^- - h^+),
	\end{equation}
	where $\ln$ denotes the branch of logarithm in the plane cut by negative imaginary half-line such that $\ln(1) = 0$. Substituting in \eqref{eq415} the formula for $g$ from \eqref{eq413} and using quasiperiodicity of $\sigma$ (see, e.g., \cite{AkhiezerEng}), we obtain the system of equations:
\begin{equation}\label{eq418}
	\begin{dcases}
		-D\omega - \frac{2h^+}{\pi}\eta z^+ + \frac{2h^-}{\pi}\eta z^- = -ih, \\
		-D\omega' - \frac{2h^+}{\pi}\eta'z^+ + \frac{2h^-}{\pi}\eta'z^- = -\delta, \\
		D + \frac{h^-}{\pi}(\zeta(\omega - z^-) - \zeta(\omega + z^-)) - \frac{h^+}{\pi}(\zeta(\omega-z^+) - \zeta(\omega+z^+)) = 0, \\
		D + \frac{h^-}{\pi}(\zeta(\omega' + \omega - z^-) - \zeta(\omega' + \omega + z^-)) \\ \qquad-\frac{h^+}{\pi}(\zeta(\omega' + \omega-z^+) - \zeta(\omega' + \omega+z^+)) = 0.
	\end{dcases}
\end{equation}

In this system of equations there are five variables (since the quantities $\omega,\omega',\eta,\eta'$ are determined by $g_2$ and $g_3$) $g_2,g_3,D,z^+,z^-$ (the first two are real and the other are imaginary) and four equations \eqref{eq418} (the first, third and fourth equations are imaginary and the second one is real). Thus, it is natural to consider a single parameter family of curves that necessarily contains a suitable one, i.e. to consider functions $g_2 = g_2(\gamma)$ and $g_3 = g_3(\gamma)$ and use the system \eqref{eq418} to determine the parameters $\gamma, D, z^+, z^-$.

In what follows we shall use the family of curves that is defined by the roots of the polynomial $F$: $x_1 = \gamma - 1/2$, $x_2 = -2\gamma$, $x_3 = \gamma+1/2$, $\gamma \in (-1/6,1/6)$ (more detailed analysis of this family is given during the study of the degeneration $\delta \rightarrow 0$). This family corresponds to the normalization condition $x_3 - x_1 = 1$ in addition to the already given relation $x_1 + x_2 + x_3 = 0$.

\begin{figure}[ht!]
	\centering
	\begin{subfigure}{.5\textwidth}
		
		\centering
		\includegraphics[width=.9\linewidth]{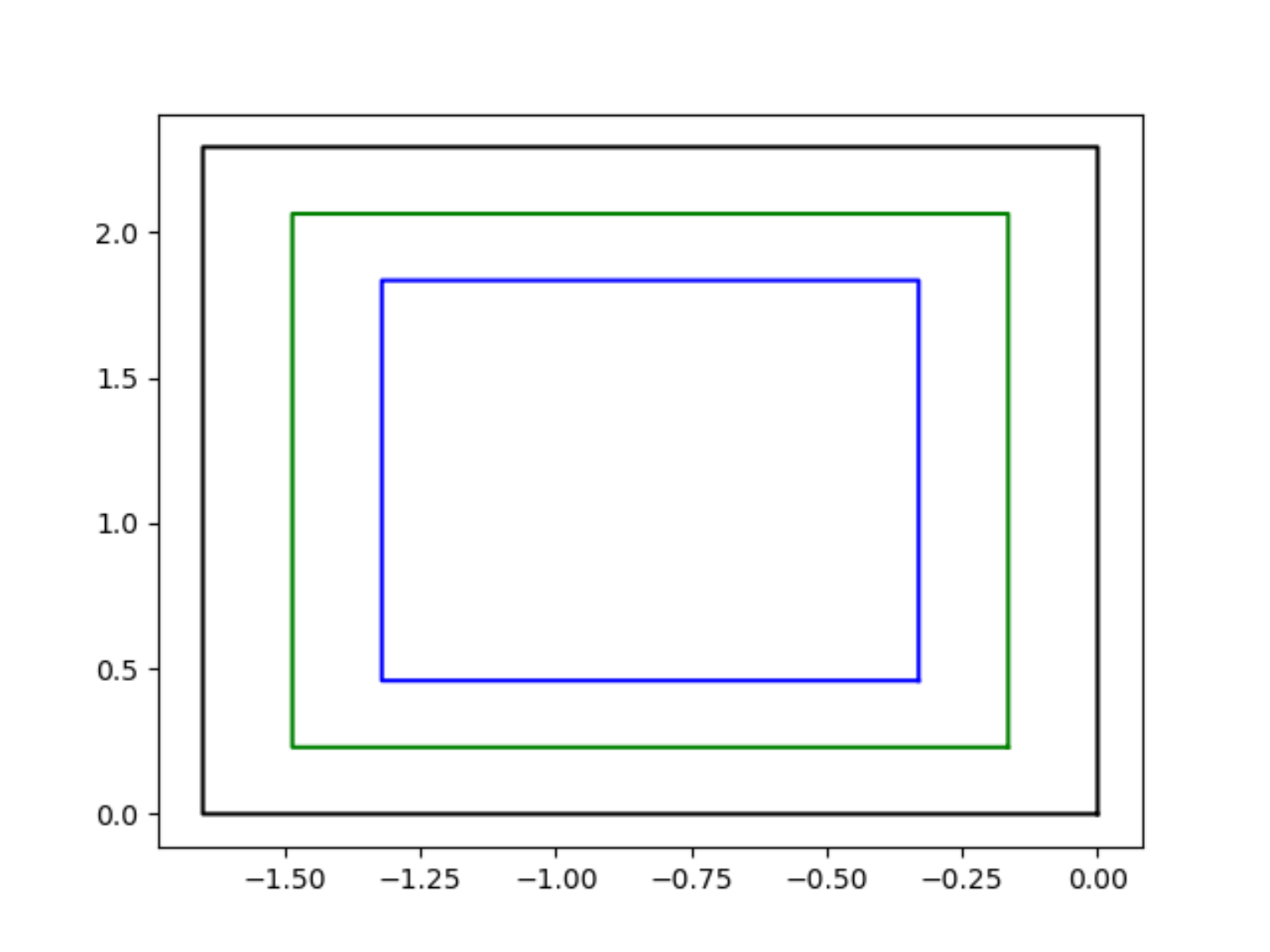}
		
		\caption{ The rectangle $P$ and contours in it.}
	\end{subfigure}%
	\begin{subfigure}{.5\textwidth}
		
		\centering
		\includegraphics[width=.9\linewidth]{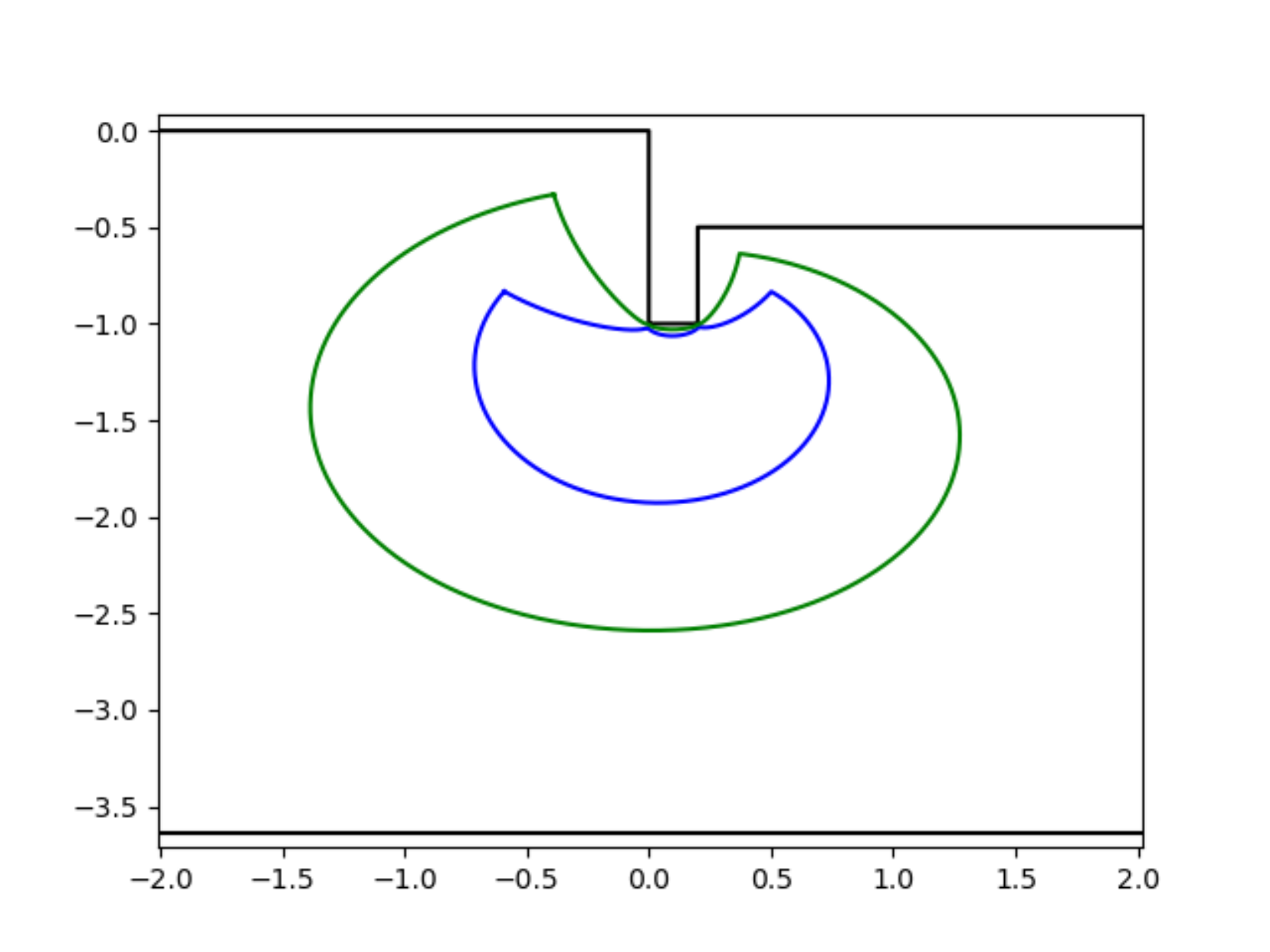}
		
		\caption{The image of the rectangle and the contours.}
	\end{subfigure}
	\begin{subfigure}{.5\textwidth}
		
		\centering
		\includegraphics[width=.9\linewidth]{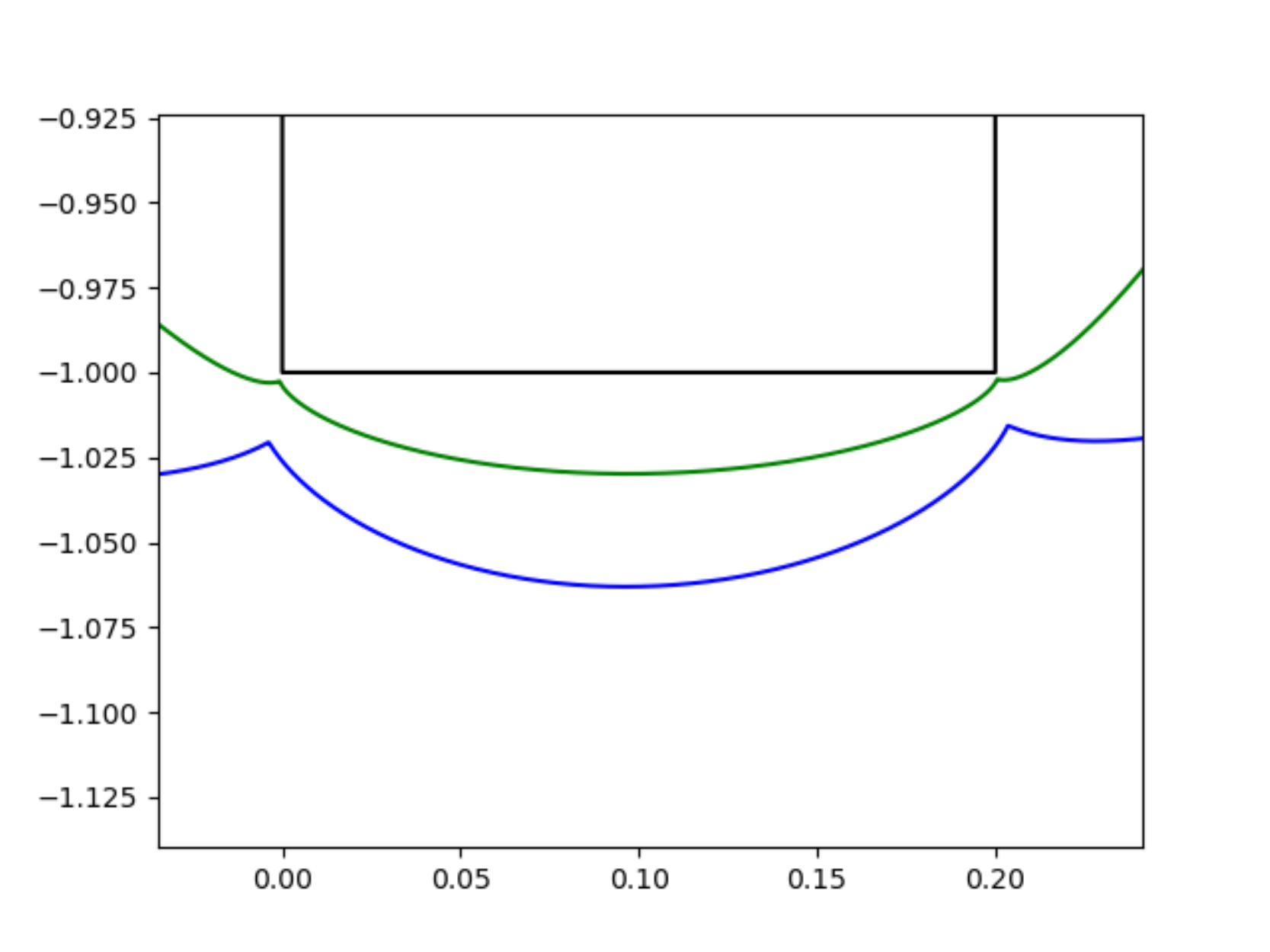}
		
		\caption{The behaviour near $[w_2,w_3]$.}
	\end{subfigure}%
	
	\caption{The conformal mapping $Q$.}
\end{figure}

\subsection{On the numerical implementation}

It was decided to use for numerical implementation the explicit computation of the sigma function depending on parameters $g_2,g_3$ through its Taylor series (see \cite{Weier} or Theorem~\ref{tWeierSeries}). It is clear that for the effective solution of the system \eqref{eq418} it is necessary to compute all the quantities in it and their derivatives with respect to parameters. In the end it reduces to the computation of $\omega$ and $\omega'$ and their derivatives with respect to $g_2$ and $g_3$ and, also, $\zeta$ and its derivatives with respect to $z, g_2,g_3$. Since $$
\zeta = \frac{1}{\sigma} \frac{\partial \sigma}{\partial z} ,
$$
the problem of computation of $\zeta$ and its derivatives can be solved easily. In order to compute $\omega$ we note that $\sigma$ has zeros exactly in the points of the lattice  $\{2m\omega + 2n\omega':n,m \in \mathbb Z\}$ and these zeros are simple. An effective way to localize a simple zero $z_0$ of a holomorphic function $f$ is to compute integral of $zf'(z)/2\pi if(z)$ on a contour enclosing $z_0$. To find a suitable contour it is possible to apply a variant of binary search using that $\omega \ge \pi/2$. Using the specified method either directly, or for an approximate calculation of zero and subsequent application of equation solving methods, it is easy to construct an effective and precise algorithm of computation of $\omega$ (and $\omega'$). In order to compute their derivatives it is possible to differentiate the integral of $z\sigma'(z)/\sigma(z)$ by $g_2$ or $g_3$, and to compute it explicitly by determining the residue in the zero of $\sigma$. Thus, the solution of the system \eqref{eq418} can be completely reduced to the computation of the sigma function and its derivatives with respect to $z$, $g_2$, and $g_3$.

We demonstrate the solution of a specific problem by this method. Let $h^+ = \pi$, $h^- = \pi + 0.5$, $h = 0.5$, $\delta = 0.2$. We shall search for the solution in the one parameter family of curves defined by $x_1 = \gamma - 1/2$, $x_2 = -2\gamma$, $x_3 = \gamma + 1/2$, $\gamma \in (-1/6, 1/6)$. The solution of the system \eqref{eq418} is $$(\gamma,D,z^+,z^-) = (0.1051616134,0.0203152915i,1.3043479103i,0.7195735824i).$$ Given this $\gamma$ we obtain $\omega = 1.6518996331$, $\omega' = 2.2939120295i$. On Figure 4 the image of the rectangle $P$ with vertices $0,\omega',\omega'-\omega,-\omega$ under the map $Q$ is shown.

\section{Stability of the solution under the degeneration of the region}
Here we shall consider a problem of conformal mapping of the upper half-plane onto the region $\widetilde{\Omega}$ 
that comes from $\Omega$ with degeneration $\delta\rightarrow 0$ (see Figure 5) and analyse behaviour of the solution under the condition that no other degeneration is happening (i.e. quantities $h^-,h^+,h,h^-+h-h^+,h^+-h$ have positive limits).

\begin{figure}[h!]
	\centering
	\begin{tikzpicture}
		\draw[black,thick] (-5,-1) -- (5,-1);
		\draw[black,thick] (-5,2) -- (0,2) node [pos=1,below left] {$w_4$};
		\draw[black,thick] (0,2) -- (0,0) node [pos=1,below] {$w_2= w_3$};
		\draw[black,thick] (0,1) -- (5,1) node [pos=0,left] {$w_1$};
		\draw[<->,black,dashed,thick] (0.2,0) -- (0.2,1)  node [pos=0.5,right] {$h$};
		\draw[<->,black,dashed,thick] (-4,-1) -- (-4,2) node [pos=0.5,right] {$h^-$};
		\draw[<->,black,dashed,thick] (4,-1) -- (4,1) node [pos=0.5,right] {$h^+$};
	\end{tikzpicture}
	\caption{Region $\widetilde \Omega$.}
\end{figure}
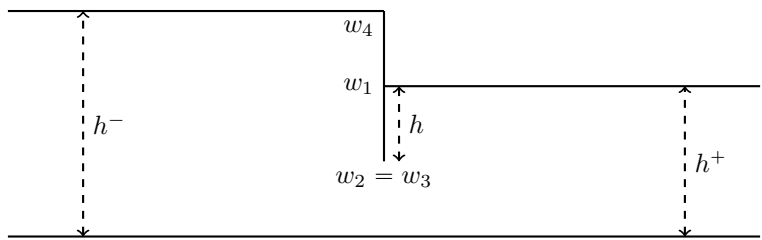
$\widetilde{\Omega}$ is determined by three parameters
$h,h^+$ and $h^-$. A conformal mapping of the upper half-plane onto $\widetilde\Omega$ can be found by the analogous method (using Christoffel-Schwartz theorem). In this case, since the corresponding Riemann surface has genus $0$, the solution can be expressed in elementary functions. Another method (that is considered here) is to apply formula \eqref{eq417}, using the fact that $\sigma$ is defined for $g_2$ and $g_3$ such that $F(x) = 4x^3 - g_2x - g_3$ has multiple roots. It is natural to suppose that the solution can be found by taking the limit under gluing the roots, that are mapped to $w_2$ and $w_3$. Together with that the stability of the solution under $\delta\rightarrow 0$ will be proved.

\subsection{Gluing of the roots}
Again consider a family of curves depending on $\gamma \in (-1/6, 1/6)$ that is given by $F_\gamma(x) = 4(x - x_1(\gamma))(x - x_2(\gamma))(x - x_3(\gamma))$, where $x_1(\gamma) = \gamma - 1/2$, $x_2(\gamma) = -2\gamma$, $x_3(\gamma) = \gamma + 1/2$. Under $\gamma \rightarrow -1/6$ the roots $x_2$ and $x_3$ glue. The limiting values of $g_2$ and $g_3$ are $4/3$ and $-8/27$ respectively. For each $\gamma$ we define quantities $\omega(\gamma)$, $\omega'(\gamma)$, $\eta(\gamma)$, $\eta'(\gamma)$. In what follows we shall omit dependence on $\gamma$.

\begin{lemma}\label{l41}
	Under $\gamma \rightarrow -1/6$ we have
	\begin{equation}\label{eq441}
		\omega,\eta \rightarrow \infty,\;\; \omega' \rightarrow \frac{i\pi}{2},\;\;\eta' \rightarrow -\frac{i\pi}{6},\;\; \frac{\eta}{\omega} \rightarrow -\frac{1}{3}.
	\end{equation}
	Moreover,
	\begin{equation}\label{eq442}
		\begin{gathered}
		\sigma(z,\frac{4}{3}, -\frac{8}{27}) = e^{-\frac{z^2}{6}} \sinh(z),\;\; \zeta(z,\frac{4}{3}, -\frac{8}{27}) = \coth(z) - \frac{z}{3},\\ \wp(z,\frac{4}{3}, -\frac{8}{27}) = \frac{1}{\sinh^2(z)} + \frac{1}{3}.
		\end{gathered}
	\end{equation}
	Finally, there exists $\varepsilon > 0$ such that for $\gamma + 1/6 < \varepsilon$ the estimation
	\begin{equation}\label{eq44asympt}
		-c_1\ln(\gamma + 1/6) \le \omega(\gamma) \le -c_2\ln(\gamma + 1/6)
	\end{equation}
	holds, where $0 < c_1 < c_2$.
\end{lemma}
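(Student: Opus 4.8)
The plan is to analyze the degenerating family $F_\gamma$ directly in terms of the elliptic integrals defining the half-periods, and to identify the limiting Weierstrass functions by recognizing the degenerate cubic. Write $\mu = \gamma + 1/6 \to 0^+$; then $x_2 = -2\gamma \to 1/3$ and $x_3 = \gamma + 1/2 \to 1/3$, while $x_1 = \gamma - 1/2 \to -2/3$, so the two roots colliding are $x_2$ and $x_3$, and indeed $g_2 \to 4/3$, $g_3 \to -8/27$ (one checks $4x^3 - \tfrac43 x + \tfrac{8}{27} = 4(x+\tfrac23)(x-\tfrac13)^2$). For the limiting identities \eqref{eq442} I would start from the fact that $\wp(z,g_2,g_3)$ is characterized, up to the lattice, by $\wp'^2 = 4\wp^3 - g_2\wp - g_3$ together with the double pole $\wp(z) \sim z^{-2}$ at the origin; substituting the candidate $\wp = \sinh^{-2} z + 1/3$ one verifies the ODE with $(g_2,g_3) = (4/3,-8/27)$ directly, and the pole normalization fixes it. Then $\zeta$ is recovered by integrating $-\wp$ (using $\zeta(z) \sim 1/z$), giving $\coth z - z/3$, and $\sigma$ by integrating $\zeta$ as its logarithmic derivative with $\sigma(z) \sim z$, giving $e^{-z^2/6}\sinh z$. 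This part is routine once the degenerate factorization is in hand.

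Next I would treat the half-period asymptotics \eqref{eq441}. The period $\omega' = -\int_{x_2}^{x_3} dx/y$ is an integral between the two roots that are \emph{not} colliding with $x_1$ but colliding with each other; as $x_2,x_3 \to 1/3$ with $x_1 \to -2/3$ staying away, the integrand $1/\sqrt{4(x-x_1)(x-x_2)(x-x_3)}$ has two nearby square-root branch points, and the integral converges to $-\int$ around... more precisely one rescales $x = $ (midpoint) $+ \,(\text{half-gap})\,t$ and finds the limit is a finite elementary integral; comparing with $\omega'(4/3,-8/27)$, which by \eqref{eq442} equals the half-period of $e^{-z^2/6}\sinh z$, i.e. a zero of $\sinh$, namely $i\pi$ — but the \emph{half}-period relevant to the rectangle picture is $i\pi/2$ — one reads off $\omega' \to i\pi/2$, and correspondingly $\eta' = \zeta(\omega') \to \coth(i\pi/2) - (i\pi/2)/3 = -i\pi/6$ (since $\coth(i\pi/2) = 0$). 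For $\omega = \int_{x_1}^{x_2} dx/y$: here $x_2 \to 1/3$ collides with $x_3 \to 1/3$, so near the upper endpoint the integrand behaves like $1/\sqrt{(x-x_2)(x-x_3)} \sim 1/(x - 1/3)$ on the relevant scale, producing a logarithmic divergence; this simultaneously gives $\omega \to \infty$ and, after extracting the divergent part, the estimate \eqref{eq44asympt} with the leading behavior $\omega \sim -c\ln\mu$. The ratio $\eta/\omega \to -1/3$ should then follow because, in the same degeneration, $\eta = \zeta(\omega)$ with $\zeta \to \coth - z/3$: the $\coth$ term stays bounded while both $\eta$ and $-\omega/3$ diverge at the same logarithmic rate, so $\eta/\omega \to -1/3$. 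One must be a little careful that $\zeta$ is being evaluated along the degenerating family rather than at the limit, so I would instead derive $\eta/\omega \to -1/3$ from the Legendre relation $\eta\omega' - \eta'\omega = i\pi/2$ (so $\eta' - \eta'\omega/(\omega\cdot)$...): dividing by $\omega\omega'$ and using $\omega\to\infty$, $\omega' \to i\pi/2$, $\eta' \to -i\pi/6$ gives $\eta/\omega \to \eta'/\omega' = (-i\pi/6)/(i\pi/2) = -1/3$, which is clean and avoids pointwise convergence issues for $\zeta$.

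The main obstacle is making the endpoint analysis near the colliding roots rigorous and uniform enough to extract \eqref{eq44asympt} with genuine two-sided bounds $-c_1\ln\mu \le \omega \le -c_2\ln\mu$ rather than just $\omega \to \infty$. Concretely, I would split $\omega = \int_{x_1}^{x_2} dx/y$ into a piece bounded away from $1/3$, which stays bounded and converges, plus a piece on $[1/3 - \text{const}, x_2]$ where I substitute to isolate $\int^{x_2} dx/\sqrt{(x-x_2)(x-x_3)}$ up to a factor that is bounded above and below by positive constants (coming from $\sqrt{4(x - x_1)}$ near $x = 1/3$); that inner integral evaluates to something like $\ln\bigl((\sqrt{x-x_2}+\sqrt{x-x_3})^2/|x_2 - x_3|\bigr)$ evaluated between the endpoints, whose leading term is $-\ln|x_2 - x_3| \asymp -\ln\mu$ since $|x_2 - x_3| = |3\mu - 1|\cdot\ldots$ is comparable to $\mu$ — wait, $x_3 - x_2 = 3\gamma + 1/2 = 3\mu$, so $|x_2 - x_3| = 3\mu$ exactly, which makes the logarithmic rate transparent. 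Tracking the constants through this substitution, and verifying the error terms are $O(1)$, yields \eqref{eq44asympt}. I would also double-check the sign conventions and the branch of $\sqrt{F}$ fixed earlier in the paper so that $\omega, \eta$ are real and positive in the limit as claimed.
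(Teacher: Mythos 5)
Your proposal is correct in all three parts, but it reaches \eqref{eq442} and the limit $\eta/\omega \to -1/3$ by routes genuinely different from the paper's. For \eqref{eq442} the paper passes to the limit in the Weierstrass infinite product for $\sigma$ over the lattice $\{2m\omega+2n\omega'\}$ (which degenerates to $i\pi\mathbb Z$), then uses $\sum n^{-2}=\pi^2/6$ and the product formula for $\sin x/x$, and obtains $\zeta$ and $\wp$ by differentiation; you instead verify that $\sinh^{-2}z+1/3$ satisfies $\wp'^2=4\wp^3-g_2\wp-g_3$ with the correct pole normalization and integrate upward to $\zeta$ and $\sigma$. Your route is shorter to check by hand, but it leans on the implicit facts that $\wp(\cdot,4/3,-8/27)$ (defined as the limit along the family, or via the entire extension of $\sigma$ from the appendix) still satisfies the algebraic ODE and is uniquely determined by it together with the expansion $z^{-2}+O(z^2)$; the product formula gives the limit function directly without needing that uniqueness step, which is why the paper prefers it. For $\eta/\omega\to-1/3$ your use of the Legendre relation, dividing by $\omega\omega'$ and letting $\omega\to\infty$, is cleaner than the paper's direct comparison of the integral representations of $\eta$ and $\omega$, and it neatly sidesteps the issue of evaluating $\zeta$ along the degenerating family that you correctly flagged. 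The remaining items match the paper: \eqref{eq441} for $\omega'$ and $\eta'$ comes from rescaling the elliptic integrals between the colliding roots $x_2,x_3\to 1/3$ (your observation $x_3-x_2=3(\gamma+1/6)$ is exactly the normalization the paper uses via $t=x-x_1$), and your splitting of $\omega$ into a bounded piece plus an explicitly integrable logarithmic piece near the colliding roots is the same argument the paper gives for \eqref{eq44asympt}.
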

\begin{proof}
	\eqref{eq441} easily follows from integral representations
	$$
	\omega(\gamma) = \frac{1}{2}\int_{\gamma - \frac{1}{2}}^{-2\gamma} \frac{dx}{\sqrt{(x - \gamma - 1/2)(x - \gamma + 1/2)(x + 2\gamma)}}, $$ $$
	\omega'(\gamma) = \frac{1}{2}\int_{-2\gamma}^{\gamma+\frac{1}{2}} \frac{dx}{\sqrt{-(x - \gamma - 1/2)(x - \gamma + 1/2)(x + 2\gamma)}}, $$ $$
	\eta(\gamma) = -\frac{1}{2}\int_{\gamma - \frac{1}{2}}^{-2\gamma} \frac{xdx}{\sqrt{(x - \gamma - 1/2)(x - \gamma + 1/2)(x + 2\gamma)}},
	$$ $$
	\eta'(\gamma) = -\frac{1}{2}\int_{-2\gamma}^{\gamma + \frac{1}{2}} \frac{xdx}{\sqrt{-(x - \gamma - 1/2)(x - \gamma + 1/2)(x + 2\gamma)}}. $$
	To derive \eqref{eq442} one can pass to the limit $\gamma \rightarrow -1/6$ using the formula that represents $\sigma$ as the infinite product (see \cite{AkhiezerEng}). We obtain
	$$
	\sigma(z,\frac{4}{3}, -\frac{8}{27}) = z\prod_{n \ne 0} \left(1 - \frac{z}{i n \pi}\right) e^{\frac{z}{i n \pi} - \frac{z^2}{2 n^2 \pi^2}}.
	$$
	Using classical identities
	$$
	\sum_{n = 1}^\infty \frac{1}{n^2} = \frac{\pi^2}{6},\;\; \prod_{n = 1}^\infty \left(1 - \frac{x^2}{n^2 \pi^2}\right) = \frac{\sin(x)}{x},
	$$
	we derive the first equation of \eqref{eq442}. The rest follow from equalities $\zeta(z) = \sigma'(z)/\sigma(z)$, $\wp(z) = -\zeta'(z)$.
	
	Now we estimate the growth of $\omega(\gamma)$. Consider equality
	$$
	\omega(\gamma) = \frac{1}{2}\int_0^{1/2 - 3\gamma} \frac{dt}{\sqrt{t(1-t)(1/2 - 3\gamma - t)}}.
	$$
	Note that for small $\gamma$ integral on the segment $[0,1/2]$ is bounded and, therefore,
	$$
	\omega(\gamma) \sim \frac{1}{2}\int_{1/2}^{1/2 - 3\gamma} \frac{dt}{\sqrt{t(1-t)(1/2 - 3\gamma - t)}}.
	$$
	The estimation of the integral in rhs is quite simple since $1/\sqrt{t}$ is bounded from below and above by positive constants and the remaining integral can be calculated explicitly.
\end{proof}

Since $\omega' \rightarrow i\pi/2$ and $\omega \rightarrow \infty$, it is natural to suppose that \eqref{eq417} can give a formula for a conformal mapping of the half-strip $$S = \{z \in \mathbb C: \RE z < 0, \IM z \in (0,\pi/2)\}$$  onto $\widetilde\Omega$. Let
\begin{equation}\label{eq443}
	\widetilde Q(z) = Dz + \frac{h^-}{\pi}\ln\left(\frac{\sigma(z-z^-)}{\sigma(z+z^-)}\right) - \frac{h^+}{\pi}\ln\left(\frac{\sigma(z-z^+)}{\sigma(z+z^+)}\right) - i(h^- - h^+),
\end{equation}
where $\sigma$ is taken at values $g_2 = 4/3$, $g_3 = -8/27$ and $D \in \mathbb C$, $z^-, z^+\in(0,i\pi/2)$ are the parameters. Substituting \eqref{eq442} into \eqref{eq443}, we obtain
\begin{equation}\label{eq444}
	\begin{multlined}
	\widetilde Q(z) = z\left(D + \frac{2h^-z^-}{3\pi} - \frac{2h^+z^+}{3\pi}\right) \\+ \frac{h^-}{\pi}\ln\frac{\sinh(z - z^-)}{\sinh(z+z^-)} - \frac{h^+}{\pi}\ln\frac{\sinh(z - z^+)}{\sinh(z + z^+)}- i(h^- - h^+).
	\end{multlined}
\end{equation}
It is easy to check that if $\widetilde Q$ has a non-zero linear term it has no limit under $\RE z \rightarrow -\infty$. If$$
D + \frac{2h^-z^-}{3\pi} - \frac{2h^+z^+}{3\pi} = 0,
$$ its limit is equal to $2(h^-z^- - h^+z^+)/\pi - i(h^- - h^+)$. Thus, if $\widetilde Q$ conformally maps $S$ onto $\widetilde \Omega$, then the conditions
\begin{equation}\label{eq445}
	\begin{dcases}
		D + \frac{2h^-z^-}{3\pi} - \frac{2h^+z^+}{3\pi} = 0, \\
		h^- z^- - h^+ z^+ = -\frac{ih\pi}{2}
	\end{dcases}
\end{equation}
hold. Obviously, these conditions are sufficient under the additional assumption that the derivative of $\widetilde{Q}$ is not vanishing on $S$ and its boundary. Using tedious but elementary calculations it can be shown that this condition is equivalent to 
\begin{equation}\label{eq446}
	h^-\sinh(2z^-) = h^+\sinh(2z^+).
\end{equation}
Thus, \eqref{eq445} and \eqref{eq446} determine the parameters $D,z^-,z^+$, at which $\widetilde Q$ is the desired conformal mapping.
 
We show that the obtained formula reduces to Christoffel-Schwartz integral in the upper half-plane under the variable change $x = \wp(z) = 1/\sinh^2(z) + 1/3$. Changing the variable in \eqref{eq443} and using that the linear term vanishes we obtain the following formula for the conformal mapping of $\mathbb C_+$ onto $\widetilde{\Omega}$:
 \begin{equation}\label{eq448}
	\widetilde{W}(x) = \frac{h^-}{\pi}\ln \left(\frac{\sqrt{x^- + 2/3} - \sqrt{x + 2/3}} {\sqrt{x^- + 2/3} + \sqrt{x + 2/3}}\right) + \frac{h^+}{\pi}\ln \left(\frac{\sqrt{x^+ + 2/3} - \sqrt{x + 2/3}} {\sqrt{x^+ + 2/3} + \sqrt{x + 2/3}}\right).
\end{equation}
 
 The equations on the parameters $z^-$ and $z^+$ can be rewritten in the form
 \begin{equation}\label{eq449}
 	\begin{dcases}
 		\frac{h^-\sqrt{x^- + 1}}{x^-} = \frac{h^+\sqrt{x^++1}}{x^+}, \\
 		h^- \ln\left(\frac{1+\sqrt{x^- + 4/3}}{\sqrt{x^- + 1/3}}\right) + h^+\ln\left(\frac{1+\sqrt{x^+ + 4/3}}{\sqrt{x^+ + 1/3}}\right) = -\frac{ih\pi}{2}.
 	\end{dcases}
 \end{equation}
Differentiating $\widetilde{W}$ and using the first equation from \eqref{eq449} we obtain  $$
d\widetilde{W} = \frac{h^-\sqrt{x^-+1}- h^+\sqrt{x^++1}}{\pi} \frac{(x - 1/3)dx}{\sqrt{x+2/3}(x-x^-)(x-x^+)}.
$$
Therefore we found the exact form of the constant in Christoffel-Schwartz integral. The remaining parameters $x^-$ and $x^+$ can be found from the system of equations \eqref{eq449}.

\subsection{Passing to the limit}

Consider a sequence of regions determined by the parameters $(h^-_n, h^+_n, h_n, \delta_n)$. Assume that they have limits $(h^-_{\lim}, h^+_{\lim}, h_{\lim}, 0)$. Also we assume that $h^+_{\lim} - h_{\lim} > 0$ and $h^-_{\lim} - h^+_{\lim} + h_{\lim} > 0$. We shall prove that the parameters $(D_n, \gamma_n, z^-_n, z^+_n)$, given by the solution of the system \eqref{eq418} for the corresponding regions, have limits $(D_{\lim}, -1/6, z^-_{\lim}, z^+_{\lim})$, and, moreover, parameters $(D_{\lim}, z^-_{\lim}, z^+_{\lim})$ satisfy \eqref{eq445} and \eqref{eq446}. Thus, in view of the fact that the Weierstrass sigma function is entire, it follows that the constructed solution is stable.

The following proof is rather long and technical and, therefore, we shall omit most of the calculations.

In the following estimations we shall also use parameters $x^-_n, x^+_n, x_1^{(n)}, x_2^{(n)}, x_3^{(n)}, C_n$ of the map $W_n$.

\begin{lemma}
	Assume that $\gamma_n \rightarrow -1/6$ and $\delta_n  \omega(\gamma_n) \rightarrow 0$.
	Then the indicated convergence holds.
\end{lemma}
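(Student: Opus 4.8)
The plan is to establish compactness of the parameter sequences, pass to a subsequential limit in the system~\eqref{eq418}, identify that limit with a solution of \eqref{eq445}--\eqref{eq446}, and then invoke uniqueness of such a solution together with the fact that $\sigma$ is entire. Throughout, the hypothesis $\gamma_n\to-1/6$ makes all the asymptotics of Lemma~\ref{l41} available, and the hypothesis $\delta_n\omega(\gamma_n)\to0$ will enter at one precise point.

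\emph{A priori bounds --- the main obstacle.} First I would show that $z^-_n,z^+_n$ stay in a fixed compact subset of the open segment $\{it:0<t<\pi/2\}$, with $|z^+_n-z^-_n|$ bounded below, and hence --- via the third equation of \eqref{eq418} together with the addition formula recalled below --- that $D_n$ is bounded. In terms of the Christoffel--Schwarz data this says that the prevertices $x^-_n<x^+_n<x_1^{(n)}$ may neither escape to $-\infty$, nor collide with each other, nor collide with $x_1^{(n)}$. This is the technically heaviest point. The idea is that the limiting region $\widetilde\Omega$ is genuinely nondegenerate --- all of $h^-_{\lim},h^+_{\lim},h_{\lim},h^+_{\lim}-h_{\lim},h^-_{\lim}-h^+_{\lim}+h_{\lim}$ are positive --- so the relevant conformal invariants of $\Omega_n$ (extremal lengths / harmonic measures of the boundary arcs, in particular of the slit $[w_4,w_2]$ that separates the two infinite ends, and of the ends themselves) remain bounded away from $0$ and $\infty$; transporting these through $W_n$ excludes the degenerate prevertex configurations. (Equivalently, one estimates the Christoffel--Schwarz differential $\phi$ directly: the residues $\pm h^\pm_n/\pi$ of $\phi$ and the edge lengths $|w_2-w_1|=h_n$, $|w_3-w_2|=\delta_n$ constrain the prevertices --- this is the computation the paper suppresses.) Granting these bounds, pass to a subsequence with $D_n\to D_*$ and $z^\pm_n\to z^\pm_*$, where $0<\IM z^-_*<\IM z^+_*<\pi/2$; in particular $z^\pm_*$ avoids the poles of the limiting Weierstrass functions, so $x^\pm_n=\wp(z^\pm_n)\to\wp(z^\pm_*)$ and $\wp'(z^\pm_n)\to\wp'(z^\pm_*)$ (at $g_2=4/3$, $g_3=-8/27$) by the degeneration in Lemma~\ref{l41}.

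\emph{Passing to the limit.} The subtlety is that the four equations of \eqref{eq418} share the same leading order and, taken naively, all collapse to the single relation $D+\frac{2h^-z^-}{3\pi}-\frac{2h^+z^+}{3\pi}=0$; the three limiting equations must be recovered from combinations. Dividing the first equation of \eqref{eq418} by $\omega_n$ and using $\eta_n/\omega_n\to-1/3$ and $h_n/\omega_n\to0$ yields the first equation of \eqref{eq445}. Multiplying the first equation of \eqref{eq418} by $\omega'_n$, the second by $\omega_n$, and subtracting eliminates $D_n$; by Legendre's relation $\eta_n\omega'_n-\eta'_n\omega_n=i\pi/2$ this leaves $\tfrac{i\pi}{2}\cdot\tfrac{2}{\pi}(h^-_n z^-_n-h^+_n z^+_n)=\delta_n\omega_n-ih_n\omega'_n$, and here the hypothesis $\delta_n\omega(\gamma_n)\to0$ is exactly what makes the right-hand side converge; with $\omega'_n\to i\pi/2$ one gets the second equation of \eqref{eq445}. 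For \eqref{eq446} I would subtract the fourth equation of \eqref{eq418} from the third and apply $\zeta(u+v)-\zeta(u-v)=2\zeta(v)-\wp'(v)/(\wp(u)-\wp(v))$ with $u=\omega_n$ and with $u=\omega_n+\omega'_n$, so that $\wp(u)$ is $x_3^{(n)}$, resp.\ $x_2^{(n)}$, and $\wp'(z^\pm_n)^2=F_n(x^\pm_n)$; after $D_n$ cancels, the difference collapses to
\[
\frac{x_2^{(n)}-x_3^{(n)}}{\pi}\left(\frac{h^-_n\,\wp'(z^-_n)}{(x_3^{(n)}-x^-_n)(x_2^{(n)}-x^-_n)}-\frac{h^+_n\,\wp'(z^+_n)}{(x_3^{(n)}-x^+_n)(x_2^{(n)}-x^+_n)}\right)=0 .
\]
Dividing by $x_2^{(n)}-x_3^{(n)}\ne0$ (legitimate since $\gamma_n\ne-1/6$), letting $n\to\infty$ with $x_2^{(n)},x_3^{(n)}\to1/3$, and substituting the limiting formulas~\eqref{eq442} for $\wp$ and its derivative, one obtains $h^-_{\lim}\sinh(2z^-_*)=h^+_{\lim}\sinh(2z^+_*)$, which is \eqref{eq446}.

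\emph{Conclusion.} Since \eqref{eq445}--\eqref{eq446} determine $(D,z^-,z^+)$ uniquely (established in the analysis of the limiting problem), the subsequential limit $(D_*,z^-_*,z^+_*)=:(D_{\lim},z^-_{\lim},z^+_{\lim})$ does not depend on the subsequence, so the whole sequence $(D_n,\gamma_n,z^-_n,z^+_n)$ converges to $(D_{\lim},-1/6,z^-_{\lim},z^+_{\lim})$, and the limit satisfies \eqref{eq445}--\eqref{eq446}. Feeding these limiting parameters into \eqref{eq417} and using \eqref{eq442} turns that formula into \eqref{eq443}, the conformal map of $S$ onto $\widetilde\Omega$; since $\sigma$ is entire (and the remaining ingredients of \eqref{eq417} depend continuously on the data), $W_n\to\widetilde W$ locally uniformly, i.e.\ the solution is stable. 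I expect the a priori compactness of the prevertices to be the only genuinely hard step; the remainder is bookkeeping, delicate only because of the leading-order cancellation described above.
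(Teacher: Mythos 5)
Your limit-passing computations coincide with the paper's: the first equation of \eqref{eq445} is read off from one of the first two equations of \eqref{eq418} (the paper uses the second, where $\delta_n\to0$ enters; you divide the first by $\omega_n$ and use $\eta/\omega\to-1/3$ --- both work), the second comes from the same Legendre-relation elimination of $D_n$ with $\delta_n\omega_n\to0$ entering at exactly the point you identify, and your addition-theorem derivation of \eqref{eq446} produces literally the same expression $\wp'(z)(x_2-x_3)/\bigl((\wp-x_2)(\wp-x_3)\bigr)$ that the paper obtains via $\zeta_2-\zeta_3$ and the $\sigma_k$ functions; the uniqueness-of-the-limit argument is also identical. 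The one structural difference is where you locate the difficulty. You treat a priori compactness of the prevertices as the main obstacle \emph{inside} this lemma and sketch a conformal-invariants argument for it; the paper instead observes that under the hypothesis $\gamma_n\to-1/6$ the points $z^\pm_n$ lie on the segment $(0,\omega'(\gamma_n))$ with $\omega'(\gamma_n)\to i\pi/2$, so boundedness is immediate, and $D_n$ is then bounded by the first equation of \eqref{eq418} since $\eta/\omega\to-1/3$ and $h_n/\omega_n\to0$. The genuinely hard prevertex estimates you anticipate (excluding $x^-_n\to-\infty$, collisions with $x_1^{(n)}$, etc.) do appear in the paper, but as the \emph{subsequent} lemmas whose job is to verify the hypotheses $\gamma_n\to-1/6$ and $\delta_n\omega(\gamma_n)\to0$ of the present lemma, not as part of its proof. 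One caveat applies to both you and the paper: after passing to a subsequence, neither argument rules out that $z^\pm_*$ lands at an endpoint of the segment or that $z^-_*=z^+_*$, which would make the limiting equations degenerate; you at least flag this, while the paper passes over it silently, so your extra paragraph is a sketch of a repair rather than a deviation.
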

\begin{proof}
	Note that $\gamma_n \rightarrow -1/6$ implies that sequences $z^-_n$ and $z^+_n$ are bounded. The first equation in \eqref{eq418} then implies that $D_n$ is also bounded. Passing to the subsequences we can assume that all these sequences are convergent (if we succeed to prove that the limits satisfy \eqref{eq445} and \eqref{eq446}, then uniqueness of the solution implies that all the subsequences converge to the same limit, and, therefore, the initial sequences converge). The first equation in \eqref{eq445} is obtained by passing to the limit in the second equation in \eqref{eq418} in view of Lemma \ref{l41}. Multiplying the first equation in \eqref{eq418} by $\omega'$ and the second one by $\omega$, subtracting and passing to limit leads to the second equation in \eqref{eq445} (the term $\delta \omega$ by assumption tends to zero). 
	
	Now we derive \eqref{eq446} for the limits of the sequences. Recall the following notation of the theory of elliptic functions (see \cite{AkhiezerEng}):
		$$
	\zeta_2(z) = \zeta(z + \omega) - \eta,
	$$
	$$
	\zeta_3(z) = \zeta(z + \omega + \omega') + \eta + \eta'.
	$$
	These functions are connected to $\sigma_2, \sigma_3$:
	$$
	\zeta_k = \frac{1}{\sigma_k} \frac{d \sigma_k}{d z} = \frac{d \ln \sigma_k}{d z}.
	$$
	Finally, $\sigma_k = \sigma \sqrt{\wp - x_k}$. The last two equations in \eqref{eq418} can be rewritten as	
	\begin{equation}\label{eq447}
		D + \frac{h^-}{\pi}(\zeta_k(-z^-) - \zeta(z^-)) - \frac{h^+}{\pi}(\zeta_k(-z^+) - \zeta(z^+)) = 0, \;\; k = 2,3.
	\end{equation}

	Since
	$$
	\begin{multlined}
		\zeta_2(z) - \zeta_3(z) = \frac{\sigma'(z) \sqrt{\wp - x_2} + \wp'(z) \sigma(z) (\wp - x_2)^{-1/2}}{\sigma(z) \sqrt{\wp - x_2}} \\ - \frac{\sigma'(z) \sqrt{\wp - x_3} + \wp'(z) \sigma(z) (\wp - x_3)^{-1/2}}{\sigma(z) \sqrt{\wp - x_3}},
	\end{multlined}
	$$
	it follows that
	\begin{equation}\label{zetadiff}
		\zeta_2(z) - \zeta_3(z) = \frac{\wp'(z)(x_2 - x_3)}{(\wp - x_2)(\wp - x_3)}.
	\end{equation}
	Equation \eqref{eq446} is derived by passing to the limit (using Lemma \ref{l41}) from equations \eqref{eq447} (from which the constant $D$ can be eliminated) and substitution the formula for $\zeta_2 - \zeta_3$ from \eqref{zetadiff}.
\end{proof}
\begin{lemma}
	Inequalities $|C_n| \ge a_1$, $|C_n| \le a_2 \sqrt{x_3^{(n)} - x^-_n}$ hold for some positive constants $a_1,a_2$. Moreover, the sequence $x^+_n$ is bounded from below.
\end{lemma}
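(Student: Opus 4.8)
The idea is to compute $|C_n|$ in two ways — from the residues of the Christoffel–Schwartz differential $\phi_n$ at its poles, and from the length of the side $[w_1,w_2]$ — and to compare the outcomes. Computing the residues of $\phi_n$ at the simple poles $x^-_n$ and $x^+_n$ and using that their moduli equal $h^-_n/\pi$ and $h^+_n/\pi$ yields
\[
|C_n|=\frac{h^-_n}{\pi}\cdot\frac{(x^+_n-x^-_n)\sqrt{x_1^{(n)}-x^-_n}}{\sqrt{(x_2^{(n)}-x^-_n)(x_3^{(n)}-x^-_n)}}
=\frac{h^+_n}{\pi}\cdot\frac{(x^+_n-x^-_n)\sqrt{x_1^{(n)}-x^+_n}}{\sqrt{(x_2^{(n)}-x^+_n)(x_3^{(n)}-x^+_n)}},
\]
and integrating $|\phi_n|$ along $[x_1^{(n)},x_2^{(n)}]$ gives $h_n=|C_n|\,A_n$ with
\[
A_n=\int_{x_1^{(n)}}^{x_2^{(n)}}\frac{\sqrt{(x_2^{(n)}-x)(x_3^{(n)}-x)}}{(x-x^-_n)(x-x^+_n)\sqrt{x-x_1^{(n)}}}\,dx .
\]
Throughout I would use $x^-_n<x^+_n<x_1^{(n)}<x_2^{(n)}<x_3^{(n)}$, the normalisation $x_3^{(n)}-x_1^{(n)}=1$ with each $x_i^{(n)}\in(-2/3,2/3)$, and the fact that $h^\pm_n,h_n$ are bounded and bounded away from $0$.

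The upper estimate is immediate from the first identity: since $x^+_n-x^-_n\le x_1^{(n)}-x^-_n$ and $\sqrt{(x_2^{(n)}-x^-_n)(x_3^{(n)}-x^-_n)}\ge x_1^{(n)}-x^-_n$, we get $|C_n|\le\frac{h^-_n}{\pi}\sqrt{x_1^{(n)}-x^-_n}\le\frac{h^-_n}{\pi}\sqrt{x_3^{(n)}-x^-_n}$, so $a_2=\sup_n h^-_n/\pi$ works. For the lower bound on $x^+_n$ I would bound $A_n$ from above, using $(x_2^{(n)}-x)(x_3^{(n)}-x)\le1$ and $(x-x^-_n)(x-x^+_n)\ge(x_1^{(n)}-x^-_n)(x_1^{(n)}-x^+_n)$ on the interval of integration, to obtain $A_n\le\frac{2}{(x_1^{(n)}-x^-_n)(x_1^{(n)}-x^+_n)}$ and hence
\[
|C_n|=\frac{h_n}{A_n}\ \ge\ \tfrac12\,h_n\,(x_1^{(n)}-x^-_n)(x_1^{(n)}-x^+_n).
\]
On the other hand, in the second residue identity the fraction is $\le1$, because $(x_2^{(n)}-x^+_n)(x_3^{(n)}-x^+_n)\ge(x_1^{(n)}-x^+_n)(x_3^{(n)}-x_1^{(n)})=x_1^{(n)}-x^+_n$, so $|C_n|\le\frac{h^+_n}{\pi}(x^+_n-x^-_n)<\frac{h^+_n}{\pi}(x_1^{(n)}-x^-_n)$. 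Dividing the two inequalities by $x_1^{(n)}-x^-_n>0$ gives $x_1^{(n)}-x^+_n\le 2h^+_n/(\pi h_n)$, which is bounded; since $x_1^{(n)}>-2/3$, the sequence $x^+_n$ is bounded below.

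For $|C_n|\ge a_1$ I would argue by contradiction: assume $|C_n|\to0$ along a subsequence. By the displayed lower bound and the lower bound on $h_n$, this forces $(x_1^{(n)}-x^-_n)(x_1^{(n)}-x^+_n)\to0$, hence — since $x_1^{(n)}-x^-_n\ge x_1^{(n)}-x^+_n>0$ — both $x_1^{(n)}-x^+_n\to0$ and $x_1^{(n)}-x^-_n\to0$; thus the prevertices $x^-_n,x^+_n$ crowd onto $x_1^{(n)}$, and (refining the subsequence so that $x_2^{(n)}$ converges) all of $x^-_n,x^+_n,x_1^{(n)}$ tend to a common limit $L$ while $x_3^{(n)}\to L+1$. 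If $x_2^{(n)}$ does not also tend to $L$ — equivalently, if $\gamma_n$ stays away from $1/6$ — then in the first residue identity the numerator $(x^+_n-x^-_n)\sqrt{x_1^{(n)}-x^-_n}$ tends to $0$ while the denominator $\sqrt{(x_2^{(n)}-x^-_n)(x_3^{(n)}-x^-_n)}$ tends to a positive number, so the right-hand side, being $\pi|C_n|$ times the ratio $\frac{(x^+_n-x^-_n)\sqrt{x_1^{(n)}-x^-_n}}{\sqrt{(x_2^{(n)}-x^-_n)(x_3^{(n)}-x^-_n)}}$, tends to $0$; this forces $h^-_n\to0$, contradicting $h^-_n\to h^-_{\lim}>0$.

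The only remaining possibility is $\gamma_n\to1/6$ along the subsequence, i.e.\ $x^-_n,x^+_n,x_1^{(n)},x_2^{(n)}$ all collapse to a common point while $x_3^{(n)}$ stays away — a separate degeneration of the family of curves. This is the genuinely delicate case, and I expect it to be the main obstacle. The plan is to substitute $x=x_1^{(n)}+(x_2^{(n)}-x_1^{(n)})s$ in $A_n$, turning it into a one-parameter integral whose size is controlled by the two ratios $(x_1^{(n)}-x^-_n)/(x_2^{(n)}-x_1^{(n)})$ and $(x_1^{(n)}-x^+_n)/(x_2^{(n)}-x_1^{(n)})$; the resulting sharp two-sided bounds for $|C_n|=h_n/A_n$, combined once more with the first residue identity, force $h^-_n\to0$ in each of the finitely many regimes of these ratios — the desired contradiction. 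Keeping these competing scales under control is the only real work; every other step is an instance of the elementary estimates above.
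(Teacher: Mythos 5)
Two of the three claims are fine, and your route to them (residue identities at $x^\pm_n$ plus the $[x_1^{(n)},x_2^{(n)}]$ integral) is a legitimate alternative to the paper's. The upper bound $|C_n|\le (h^-_n/\pi)\sqrt{x_1^{(n)}-x^-_n}$ and the chain $\tfrac12 h_n(x_1^{(n)}-x^-_n)(x_1^{(n)}-x^+_n)\le |C_n|\le \tfrac{h^+_n}{\pi}(x_1^{(n)}-x^-_n)$, giving $x_1^{(n)}-x^+_n\le 2h^+_n/(\pi h_n)$, are correct and self-contained. The gap is in the claim $|C_n|\ge a_1$. First, there is a logical slip: from $(x_1^{(n)}-x^-_n)(x_1^{(n)}-x^+_n)\to 0$ with $x_1^{(n)}-x^-_n\ge x_1^{(n)}-x^+_n>0$ you may conclude only that the \emph{smaller} factor $x_1^{(n)}-x^+_n$ tends to $0$; the larger one $x_1^{(n)}-x^-_n$ need not (e.g.\ it can stay equal to $1$ while the product vanishes), so the subsequent ``all prevertices crowd onto $x_1^{(n)}$'' dichotomy does not start correctly. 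Second, and decisively, the case you yourself flag as ``the genuinely delicate case'' ($\gamma_n\to 1/6$, i.e.\ $x^-_n,x^+_n,x_1^{(n)},x_2^{(n)}$ collapsing together) is only a plan, not a proof; as written the argument for the lower bound on $|C_n|$ is incomplete.

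The paper avoids all of this by using the one piece of boundary data you did not exploit: the side $[w_3,w_4]$, whose length is $h^-_n-h^+_n+h_n$ and which has a positive limit by hypothesis. This gives
\begin{equation*}
|C_n|\int_{x_3^{(n)}}^{+\infty}\frac{\sqrt{\bigl(x-x_2^{(n)}\bigr)\bigl(x-x_3^{(n)}\bigr)}\,dx}{\sqrt{x-x_1^{(n)}}\,(x-x^-_n)(x-x^+_n)}=h^-_n-h^+_n+h_n,
\end{equation*}
and on $[x_3^{(n)},+\infty)$ nothing degenerates: writing $u=x-x_3^{(n)}$ and using $x-x^-_n\ge x-x^+_n\ge x-x_1^{(n)}=u+1$ together with $x-x_2^{(n)}\le u+1$, the integrand is at most $\sqrt{u}/(u+1)^2$, so the integral is bounded by an absolute constant and $|C_n|\ge a_1$ follows in one line. (The same integral, bounded from below by $c/\sqrt{x_3^{(n)}-x^-_n}$ via $x-x^\pm_n\le u+(x_3^{(n)}-x^-_n)$, is how the paper also gets the upper bound on $|C_n|$.) I would recommend replacing your case analysis for the lower bound by this estimate; your residue-based arguments for the other two claims can stand.
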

\begin{proof}
	The estimations for $C^{(n)}$ easily follow from
	$$|C_n|\bigintss_{x_3^{(n)}}^{+\infty} \frac{\sqrt{\left(x - x_2^{(n)}\right)\left(x - x_3^{(n)}\right)}dx}{\sqrt{x - x_1^{(n)}}(x - x^-_n)(x - x^+_n)} = h^-_n - h^+_n + h_n.
	$$
	To prove the boundedness from below for $x^+_n$ it suffices to consider equality
	$$
	|C_n|\bigintss_{x_1^{(n)}}^{x_2^{(n)}} \frac{\sqrt{\left(x_2^{(n)}-x\right)\left(x_3^{(n)} - x\right)}dx}{\sqrt{x - x_1^{(n)}}(x - x^-_n)(x - x^+_n)} = h_n.
	$$
\end{proof}
\begin{lemma}
	Assume that sequence $x^-_n$ is bounded from below. Then there exist constants $0 < b_1 < b_2$ such that $b_1 \sqrt{\delta_n} \le |x_2^{(n)} - x_3^{(n)}| \le b_2 \sqrt{\delta_n}$. 
 
\end{lemma}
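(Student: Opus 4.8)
The plan is to reduce the statement to two‑sided estimates for one explicit integral. Put $\Delta_n := |x_2^{(n)} - x_3^{(n)}| = x_3^{(n)} - x_2^{(n)}$. Since $W_n$ sends the arc $(x_2^{(n)}, x_3^{(n)})$ onto the segment $[w_2, w_3]$ and $w_3 - w_2 = -\delta_n$ by the second relation in \eqref{eq416}, integrating the Christoffel--Schwarz differential along this arc — on which $(x - x_2^{(n)})(x - x_3^{(n)}) < 0$ while $x - x^-_n, x - x^+_n, x - x_1^{(n)} > 0$, and recalling $C_n \in i\mathbb{R}$ — yields, exactly as in the two integral identities already used in the preceding lemmas,
\begin{equation*}
\delta_n \;=\; |C_n| \int_{x_2^{(n)}}^{x_3^{(n)}} \frac{\sqrt{(x - x_2^{(n)})(x_3^{(n)} - x)}\,dx}{(x - x^-_n)(x - x^+_n)\sqrt{x - x_1^{(n)}}}.
\end{equation*}
The substitution $x = x_2^{(n)} + \Delta_n t$ rewrites this as $\delta_n = |C_n|\,\Delta_n^2\, I_n$, where
\begin{equation*}
I_n \;=\; \int_0^1 \frac{\sqrt{t(1-t)}\,dt}{\bigl(x_2^{(n)} + \Delta_n t - x^-_n\bigr)\bigl(x_2^{(n)} + \Delta_n t - x^+_n\bigr)\sqrt{\,x_2^{(n)} + \Delta_n t - x_1^{(n)}\,}}.
\end{equation*}
So it is enough to bound $|C_n|\,I_n$ from above and below by positive constants.

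I would first treat the upper bound for $\Delta_n$, which is the cheaper half. Because $\gamma_n \in (-1/6, 1/6)$, the roots $x_1^{(n)}, x_2^{(n)}, x_3^{(n)}$ stay in fixed compact sets and $\Delta_n \in (0,1)$; since $x^-_n$ is bounded below by hypothesis and $x^-_n < x^+_n < x_1^{(n)} < -1/3$, the points $x^-_n, x^+_n$ also lie in a bounded set; and the previous lemma gives $a_1 \le |C_n| \le a_2\sqrt{x_3^{(n)} - x^-_n}$, the right‑hand side being bounded, so $a_1 \le |C_n| \le a_2''$ for some constant $a_2''$. Consequently the three factors in the denominator of $I_n$ are uniformly bounded above on $[0,1]$, whence $I_n \ge \bigl(\int_0^1\sqrt{t(1-t)}\,dt\bigr)/M = \pi/(8M)$ for a suitable $M$. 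Then $\delta_n \ge (a_1\pi/(8M))\,\Delta_n^2$, i.e. $\Delta_n \le b_2\sqrt{\delta_n}$ with $b_2 = (8M/(a_1\pi))^{1/2}$.

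For the lower bound I would exploit that the inequality just proved, together with $\delta_n \to 0$, forces $\Delta_n \to 0$, hence $\gamma_n \to -1/6$ and $\mu_n := x_2^{(n)} - x_1^{(n)} = 1 - \Delta_n \to 1$; in particular there is $\mu_0 > 0$ with $\mu_n \ge \mu_0$ for all $n$. Using $x^-_n < x^+_n < x_1^{(n)} < x_2^{(n)}$, every denominator factor of $I_n$ is now bounded below: $x_2^{(n)} + \Delta_n t - x^-_n \ge x_2^{(n)} - x^+_n \ge \mu_n \ge \mu_0$ and $x_2^{(n)} + \Delta_n t - x_1^{(n)} \ge \mu_n \ge \mu_0$. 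Hence $I_n \le \pi/(8\mu_0^{5/2})$, and with $|C_n| \le a_2''$ this gives $\delta_n \le (a_2''\pi/(8\mu_0^{5/2}))\,\Delta_n^2$, i.e. $\Delta_n \ge b_1\sqrt{\delta_n}$; after a harmless shrinking of $b_1$ one has $0 < b_1 < b_2$, which is the assertion.

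Essentially everything here is bookkeeping of which quantities remain in compact sets. The one structural point, and the only place where care is needed, is the order of the two estimates: the upper bound does not need $\mu_n = x_2^{(n)} - x_1^{(n)}$ to be bounded away from $0$, and once it is in hand it pins $\gamma_n \to -1/6$, thereby excluding the competing degeneration $x_1^{(n)} \to x_2^{(n)}$ and producing the bound $\mu_n \ge \mu_0$ that the lower estimate requires. The only genuinely delicate computation is the sign and branch tracking in the displayed integral identity (the factor $i$ coming from $\sqrt{(x-x_2^{(n)})(x-x_3^{(n)})}$ on $(x_2^{(n)},x_3^{(n)})$ compensating $C_n\in i\mathbb{R}$), but it is the exact analogue of the identities already invoked for $h_n$ and for $h^-_n - h^+_n + h_n$ in the two preceding lemmas.
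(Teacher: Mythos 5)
Your proof is correct and takes essentially the same approach as the paper, whose entire argument is the phrase ``easy estimation for the integral'' applied to exactly the identity $|C_n|\int_{x_2^{(n)}}^{x_3^{(n)}}(\dots)\,dx=\delta_n$ that you display. You simply supply the details the paper omits: the substitution extracting the factor $\Delta_n^2$, the compactness bookkeeping, and the (correct) observation that the upper bound must come first so that $\delta_n\to 0$ forces $x_2^{(n)}-x_1^{(n)}$ to stay bounded away from zero before the lower bound is derived.
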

\begin{proof}
	It follows from easy estimation for the integral in equality
	$$
	|C_n|\bigintss_{x_2^{(n)}}^{x_3^{(n)}} \frac{\sqrt{\left(x - x_2^{(n)}\right)\left(x_3^{(n)} - x\right)}dx}{\sqrt{x - x_1^{(n)}}(x - x^-_n)(x - x^+_n)} = \delta_n.
	$$
\end{proof}

The foregoing Lemmas imply that it remains to prove that sequence $x^-_n$ is bounded from below (in view of the asymptotics \eqref{eq44asympt}). Assume that this is not true. Passing to the subsequences we can assume that $x^-_n \rightarrow -\infty$ and $x_1^{(n)},x_2^{(n)},x_3^{(n)}$, $x^+_n$ are convergent.

\begin{lemma}
	In the foregoing assumptions statements $x_3^{(n)} - x_2^{(n)} \rightarrow 0$, $x_1^{(n)} - x^+_n \rightarrow 0$ hold.
\end{lemma}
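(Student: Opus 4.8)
My plan is to read everything off the Christoffel--Schwarz representation $\phi_n = C_n\,\sqrt{(x-x_2^{(n)})(x-x_3^{(n)})}\,[(x-x^-_n)(x-x^+_n)\sqrt{x-x_1^{(n)}}]^{-1}dx$ of $dW_n$, together with the bound $a_1\le|C_n|\le a_2\sqrt{x_3^{(n)}-x^-_n}$ established above and a limiting argument in the system \eqref{eq418}; the two assertions require rather different mechanisms. Recall the three identities expressing the finite sides of $\Omega_n$, in particular
\[
|C_n|\int_{x_3^{(n)}}^{+\infty}\frac{\sqrt{(x-x_2^{(n)})(x-x_3^{(n)})}\,dx}{\sqrt{x-x_1^{(n)}}\,(x-x^-_n)(x-x^+_n)} = h^-_n-h^+_n+h_n ,
\]
and the analogous ones over $[x_1^{(n)},x_2^{(n)}]$ and $[x_2^{(n)},x_3^{(n)}]$ equal to $h_n$ and $\delta_n$. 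Since $x^-_n\to-\infty$ while the other four points stay bounded, $(x-x^-_n)$ is comparable to $|x^-_n|$ uniformly on bounded intervals; splitting the integral over $[x_3^{(n)},+\infty)$ into the ranges $x\lesssim 1$, $1\lesssim x\lesssim|x^-_n|$, $x\gtrsim|x^-_n|$ shows it is $\asymp|x^-_n|^{-1/2}$, so $|C_n|\asymp|x^-_n|^{1/2}$ (which is compatible with the two-sided bound above).

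For $x_1^{(n)}-x^+_n\to0$: set $J_2^{(n)}=\int_{x_1^{(n)}}^{x_2^{(n)}}\sqrt{(x_2^{(n)}-x)(x_3^{(n)}-x)}\,[\sqrt{x-x_1^{(n)}}\,(x-x^+_n)]^{-1}dx$, so that the $[x_1^{(n)},x_2^{(n)}]$--identity together with the previous step gives $J_2^{(n)}\asymp h_n|x^-_n|^{1/2}\to+\infty$. As $x_3^{(n)}-x_1^{(n)}=1$ is fixed and $\int_{x_1^{(n)}}^{x_2^{(n)}}(x-x_1^{(n)})^{-1/2}dx$ is bounded, $J_2^{(n)}$ can blow up only if the simple pole at $x^+_n$ collides with the interval endpoint $x_1^{(n)}$; the elementary evaluation $\int_0^L u^{-1/2}(u+a)^{-1}du=2a^{-1/2}\arctan\sqrt{L/a}$ then gives $J_2^{(n)}\asymp(x_1^{(n)}-x^+_n)^{-1/2}$ in that regime, hence $x_1^{(n)}-x^+_n\to0$.

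For $x_3^{(n)}-x_2^{(n)}=3\gamma_n+\tfrac12\to0$: suppose not, and pass to a subsequence with $\gamma_n\to\gamma_{\lim}\in(-1/6,1/6]$. By Lemma \ref{l41}, $\omega(\gamma_n)$ then stays bounded, so $\omega_n,\eta_n$ have finite limits, while if $\gamma_{\lim}=1/6$ the curve degenerates at the pair $x_1,x_2$ and $\sigma,\zeta,\wp$ tend to the mirror of \eqref{eq442} (with $\omega'_n,\eta'_n\to\infty$). Now $x^-_n\to-\infty$ forces the Abel preimage $z^-_n\to0$, and $x^+_n-x_1^{(n)}\to0$ forces the poles $\pm z^+_n$ to escape towards $\pm\omega'_n$ (the preimages of $x_1^{(n)}$); I pass to the limit in $\psi_n=(g_n+D_n)\,dz$. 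The dipole at $0$ coming from $\pm z^-_n$ has strength $\asymp h^-_nz^-_n\to0$, and the poles $\pm z^+_n$ merge, so by the quasiperiodicity $\zeta(z\pm\omega')=\zeta(z)\pm2\eta'$ and the limiting formulas of Lemma \ref{l41} (and its mirror), $g_n$ converges, on compacta away from the escaping poles, to a \emph{constant} $g_{\lim}$. Thus $\psi_{\lim}$ is a holomorphic $1$-form with constant coefficient; but the double zero of $\psi_n$ at the preimage $-\omega_n$ of $x_3^{(n)}$ persists to the finite point $-\omega_{\lim}$, so by Hurwitz $\psi_{\lim}\equiv0$, i.e.\ $D_{\lim}=-g_{\lim}$. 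Comparing this with the value of $D_{\lim}$ read off from the first equation of \eqref{eq418} — where the $z^-$ term drops out — and using Legendre's relation $\eta\omega'-\eta'\omega=i\pi/2$ (or, when $\gamma_{\lim}=1/6$, the explicit limits $\omega_{\lim}=\pi/2$, $\eta_{\lim}=\pi/6$ and the cancellation of the terms that grow with $z^+_n$) forces $h_{\lim}=h^+_{\lim}$, contradicting the standing hypothesis $h^+_{\lim}-h_{\lim}>0$; hence $\gamma_{\lim}=-1/6$.

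The step I expect to be the real obstacle is this last limiting analysis: passing to the limit of the Weierstrass representation while the curve itself degenerates (so that $\omega'_n,\eta'_n$ blow up), controlling uniformly the two ways the poles of $g_n$ leave every compact set (one pair collapsing at $0$, the other escaping to a half-period or off the cylinder) so that Hurwitz's theorem can be applied to the surviving zero of $\psi_n$, and keeping the sign conventions in Legendre's relation and in the degenerate $\sigma,\zeta$ formulas straight.
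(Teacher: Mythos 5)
Your proof of the first assertion, $x_1^{(n)} - x^+_n \rightarrow 0$, is correct, and it takes a different (slightly more roundabout) route than the paper: you combine the side-length identity over $[x_1^{(n)},x_2^{(n)}]$ with the two-sided estimate $|C_n|\asymp\sqrt{|x^-_n|}$ and the elementary evaluation of $\int_0^L u^{-1/2}(u+a)^{-1}du$, whereas the paper reads the conclusion directly off the residue identity
$|C_n|\sqrt{(x_2^{(n)}-x^+_n)(x_3^{(n)}-x^+_n)}\,\bigl[\sqrt{x_1^{(n)}-x^+_n}\,(x^+_n-x^-_n)\bigr]^{-1}=h^+_n/\pi$,
where the upper bound $|C_n|\lesssim\sqrt{|x^-_n|}$ already forces the denominator factor $\sqrt{x_1^{(n)}-x^+_n}$ to vanish. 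Both are sound; the residue route needs no asymptotics for $|C_n|$ from below.

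For the second assertion the split into two subcases ($x_2^{(n)}-x_1^{(n)}\rightarrow 0$, i.e.\ $\gamma_n\rightarrow 1/6$, versus no further degeneration) matches the paper, and your treatment of the non-degenerate subcase is essentially the paper's argument in geometric clothing: $z^-_n\rightarrow 0$, $z^+_n\rightarrow\omega'$, $g_n\rightarrow 2h^+\eta'/\pi$, the surviving zero at $-\omega_{\lim}$ kills the constant form, and Legendre gives $h_{\lim}=h^+_{\lim}$. The genuine gap is in the subcase $\gamma_n\rightarrow 1/6$. There your central claim --- that $g_n$ converges on compacta to a finite constant $g_{\lim}$ --- is false: writing $z^+_n=\omega'_n-s_n$ and using quasiperiodicity, the $h^+$-term of $g_n$ equals $-\tfrac{h^+_n}{\pi}\bigl(\zeta(z+s_n-\omega'_n)-\zeta(z-s_n-\omega'_n)-2\eta'_n\bigr)$, and $\eta'_n\rightarrow\infty$, so $g_n$ blows up uniformly on compacta and the ``constant form with a zero'' argument does not apply as stated. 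You acknowledge a needed ``cancellation of the terms that grow with $z^+_n$'', but justifying it requires evaluating $\zeta_n$ (and $\eta'_n=\zeta_n(\omega'_n)$) at arguments escaping to infinity \emph{simultaneously} with the degeneration of the lattice; the locally uniform convergence to the mirror of \eqref{eq442} gives no control there --- indeed $\eta'_n\rightarrow\infty$ even though the degenerate $\zeta$ is finite off the lattice, which is exactly the failure of the limit interchange you would be relying on. The paper sidesteps all of this by staying with the algebraic system \eqref{eq418}: it uses the Legendre identity to get $\omega'/\eta'-\omega/\eta\rightarrow 0$, eliminates $D$ between the first two equations, and extracts $\lim(h^+_nz^+_n/\omega'_n-h_n)=0$, which only involves the bounded ratio $z^+_n/\omega'_n$ and yields the contradiction with $h_{\lim}<h^+_{\lim}$ without ever taking a limit of $\zeta_n$ at escaping points. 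To repair your argument you would need either this algebraic detour or a uniform (in $\gamma$) asymptotic for $\zeta(z,g_2(\gamma),g_3(\gamma))$ near the end of the fundamental rectangle, which is substantially more than Lemma \ref{l41} provides.
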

\begin{proof}
	Equality
	$$
	|C_n| \frac{\sqrt{\left(x_2^{(n)} - x^+_n\right)\left(x_3^{(n)} - x^+_n\right)}}{\sqrt{x_1^{(n)} - x^+_n}(x^+_n - x^-_n)} = \frac{h^+_n}{\pi}
	$$
	implies $x_1^{(n)} - x^+_n \rightarrow 0$. 
	
	To prove that $x_3^{(n)} - x_2^{(n)} \rightarrow 0$ we return to parameters $(D_n, \gamma_n, z^-_n, z^+_n)$. Assume that $x_2^{(n)} - x_1^{(n)} \rightarrow 0$. Then $\omega'(\gamma_n), \eta'(\gamma_n) \rightarrow \infty$, and $\omega$ and $\eta$ have finite limits. Moreover, Legendre identity (see, e.g., \cite{AkhiezerEng} or \cite{Chandra}) implies
	$$
	\lim_{n \rightarrow \infty} \frac{\omega'(\gamma_n)}{\eta'(\gamma_n)} = \lim_{n \rightarrow \infty} \frac{\omega(\gamma_n)}{\eta(\gamma_n)}.
	$$
	The first two equations from \eqref{eq418} imply
	$$
	\lim_{n \rightarrow \infty}\left(\frac{2 h^+_n}{\pi} z^+_n \left(\frac{\omega'}{\eta'} - \frac{\omega}{\eta}\right) - i \frac{h_n}{\omega}\right) = 0.
	$$
	Therefore, 
	$$
	\lim_{n \rightarrow \infty}\left(\frac{h^+_n z^+_n}{\omega'} - h_n\right) = 0,
	$$
	and, passing to the limit, we obtain $h_{\lim} \ge h^+_{\lim}$. This contradicts the assumptions made.
	
	Now assume that $x_2^{(n)} - x_1^{(n)} \nrightarrow 0$ and$x_3^{(n)} - x_2^{(n)} \nrightarrow 0$. Then both periods $\omega$ and $\omega'$ have finite limits. In this case $z^-_n \rightarrow 0$, $z^+_n \rightarrow \omega'(\gamma_{\lim})$. It is obvious that $D_n$ also is convergent and, passing to the limit in the second equation in \eqref{eq418}, we obtain
	$$
	-D_{\lim} \omega' - \frac{2h^+_{\lim} \omega' \eta'}{\pi} = 0.
	$$
	Substituting into the first equation we get
	$$
	\frac{2h^+_{\lim}}{\pi} \omega \eta' - \frac{2h^+_{\lim}}{\pi} \omega' \eta = -ih_{\lim},
	$$
	implying $h^+_{\lim} = h_{\lim}$.
\end{proof}

Now we have enough preparation to deduce a contradiction from $x^-_n \rightarrow -\infty$. In order to do this we shall analyse asymptotics of some sequences (in what follows the equivalence of sequences means that the quotient of them tends to $1$).

Equality
$$
|C_n| \frac{\sqrt{\left(x_2^{(n)} - x^-_n\right)\left(x_3^{(n)} - x^-_n\right)}}{\sqrt{x_1^{(n)} - x^-_n}(x^+_n - x^-_n)} = \frac{h^-_n}{\pi}
$$
implies that
\begin{equation}\label{asympt1}
	|C_n| \sim \frac{h^-_n}{\pi} \sqrt{|x^-_n|}.
\end{equation}
On the other hand
$$
|C_n| \frac{\sqrt{\left(x_2^{(n)} - x^+_n\right)\left(x_3^{(n)} - x^+_n\right)}}{\sqrt{x_1^{(n)} - x^+_n}(x^+_n - x^-_n)} = \frac{h^+_n}{\pi},
$$
and, therefore,
\begin{equation}\label{asympt2}
	\sqrt{x_1^{(n)} - x^+_n} \sim \frac{h^-_n}{h^+_n} \frac{1}{\sqrt{|x^-_n|}}.
\end{equation}
Now consider equality
$$
|C_n|\bigintss_{x_1^{(n)}}^{x_2^{(n)}} \frac{\sqrt{\left(x_2^{(n)} - x\right)\left(x_3^{(n)} - x\right)}dx}{\sqrt{x - x_1^{(n)}}(x - x^-_n)(x - x^+_n)} = h_n.
$$
Using \eqref{asympt1}, it is easy to show that the sequence in lhs is equivalent to sequence$$
\frac{h^-_n}{\pi \sqrt{|x^-_n|}} \bigintss_{x_1^{(n)}}^{x_2^{(n)}} \frac{\sqrt{\left(x_2^{(n)} - x\right)\left(x_3^{(n)} - x\right)}dx}{\sqrt{x - x_1^{(n)}}(x - x^+_n)}.
$$
Now, changing the variable, we obtain
$$
\bigintss_{x_1^{(n)}}^{x_2^{(n)}} \frac{\sqrt{\left(x_2^{(n)} - x\right)\left(x_3^{(n)} - x\right)}dx}{\sqrt{x - x_1^{(n)}}(x - x^+_n)} = \bigintss_{0}^{x_2^{(n)} - x_1^{(n)}} \frac{\sqrt{\left(x_2^{(n)} - x_1^{(n)} - x\right)(1 - x)}dx}{\sqrt{x}(x + x_1^{(n)} - x^+_n)}.
$$
It appears that the asymptotics of the last integral is independent of the convergence rate of $|x_2^{(n)} - x_1^{(n)}| \rightarrow 1$. Namely, for all sequences $\alpha_n \rightarrow 1$ and $a_n \rightarrow 0$ the equivalence
$$
\int_0^{\alpha_n} \frac{\sqrt{(\alpha_n - x)(1 - x)}dx}{\sqrt{x}(x + a_n)} \sim \int_0^1 \frac{(1-x)dx}{\sqrt{x}(x+a_n)} \sim \frac{\pi}{\sqrt{a_n}}
$$
holds. Finally, in view of \eqref{asympt2}, we obtain
$$
h_n = |C_n|\bigintss_{x_1^{(n)}}^{x_2^{(n)}} \frac{\sqrt{\left(x_2^{(n)} - x\right)\left(x_3^{(n)} - x\right)}dx}{\sqrt{x - x_1^{(n)}}(x - x^-_n)(x - x^+_n)} \sim \frac{h^-_n}{\pi \sqrt{|x^-_n|}} \frac{\pi}{\sqrt{x_1^{(n)} - x^+_n}} \sim h^+_n.
$$
It contradicts the assumption $h_{\lim} < h^+_{\lim}$.

\section{Conclusion}
The simple expression through the Weierstrass sigma function for a conformal mapping of a polygonal region $\Omega$ was obtained. For the specific example the numerical experiment was carried out. The behaviour under degeneration was analyzed and it was shown that the formula is stable and converges to the solution of the limiting problem.

The future research direction can be connected either with the construction and analysis of the solutions to similar problems corresponding, for example, to Riemann surfaces of genus $2$, or with the development of the sigma function theory: construction of the recurrent formulas for higher genus and elaboration of computational methods independent of the theta function theory.

\section{Acknowledgements}

The author expresses his gratitude to A. Bogatyrev and O. Grigoriev for posing the problem and useful discussions, and also to K. Malkov for help in the computer implementation of the calculations. The author also thanks the Center for Continuing Professional Education ``Sirius University'' for the invitation to the educational module ``Computational Technologies, Multidimensional Data Analysis, and Modelling'', during which some of the results of this work were obtained.

\begin{appendices}
\section{On the coefficients of the Weierstrass sigma function Taylor series}
	Here we prove that the sigma function is an entire function of three variables and derive a recurrent formula for its Taylor series coefficients, that was originally established by Weierstrass in \cite{Weier}. The proof given there has a gap connected to the analyticity of the sigma function in a neighbourhood of zero. Perhaps, this fact can be proved by an independent argument but, since Weierstrass does not give any references (and omits this issue completely), we decided to provide a complete proof here.

	The homogeneity condition
	$$
	\sigma(\frac{z}{\lambda}, \lambda^4 g_2, \lambda^6 g_3) = \frac{1}{\lambda} \sigma(z,g_2,g_3)
	$$
	easily implies the following differential equation for the $\sigma$ function:
	\begin{equation}\label{eq321}
		z \frac{\partial \sigma}{\partial z} - 4g_2\frac{\partial \sigma}{\partial g_2} - 6g_3 \frac{\partial \sigma}{\partial g_3} - \sigma = 0.
	\end{equation}
	Further, using the definition of the $\sigma$ function and the standard differential equation for the $\wp$ function, one can derive an equation (for a proof see \cite{Halphen})
	\begin{equation}\label{eq322}
		\frac{\partial^2 \sigma}{\partial z^2} - 12g_3 \frac{\partial \sigma}{\partial g_2} - \frac{2}{3} g_2^2 \frac{\partial \sigma}{\partial g_3} + \frac{1}{12} g_2 z^2 \sigma = 0.
	\end{equation}
	
	Let $f$ be an entire function of three variables $(z,g_2,g_3)$ satisfying \eqref{eq321} and \eqref{eq322}. We derive a relation between the Taylor series coefficients $f_{mnk}$ of $f$:
	$$
	f = \sum_{m,n,k = 0}^\infty f_{mnk} g_2^m g_3^n z^k.
	$$
	\eqref{eq321} implies that $f_{mnk} = 0$, if $k \ne 4m + 6n + 1$. Therefore, $f$ can be written in the form
	$$
	f = \sum_{m,n = 0}^\infty a_{mn} g_2^m g_3^n z^{4m + 6n + 1}.
	$$
	Now, substituting this expression of $f$ into \eqref{eq322}, we obtain the equality
	\begin{equation}\label{eq323}
		a_{mn} = \frac{12(m+1)a_{m+1,n-1} + \frac{2}{3}(n+1)a_{m-2,n+1} - \frac{1}{12}a_{m-1,n}}{(4m + 6n + 1)(4m + 6n)},
	\end{equation}
	in which for convenience $a_{mn}$ is defined by zero when $m$ or $n$ is negative. It is easy to see that \eqref{eq323} uniquely determines sequence $a_{mn}$ for given $a_{00}$. To prove this let us introduce an order relation on pairs of nonnegative integers $(m,n)$: $(m,n) \le (m',n')$, if $m+n < m'+n'$ or if $m+n = m'+n'$ and $n \le n'$.
	It is easy to see that we defined a well-order on $\mathbb Z_+ \times \mathbb Z_+$, and in \eqref{eq323} indices of the terms $a_{mn}$ in rhs are strictly less then $(m,n)$. Thus, it is proved that \eqref{eq323} determines $a_{mn}$ recursively for given $a_{00}$.
	
	If the sigma function was an entire function of three variables, or, at least, holomorphic in some neighbourhood of zero, then the recurrence relation \eqref{eq323} for its Taylor series coefficients would be proved. The difficulty is that the domain of $\sigma$ is the set $\{(z,g_2,g_3) \in \mathbb C^3: g_2^3 - 27 g_3^2 \ne 0\}$. The following considerations prove the entirety of $\sigma$ and the recurrence relation \eqref{eq323}.
	
	\begin{remark}
		It is known (see, e.g., \cite{AkhiezerEng} or \cite{Chandra}) that condition $g_2^3 - 27 g_3^2 \ne 0$ is equivalent to simplicity of the roots of polynomial $4x^3 - g_2 x - g_3$.
	\end{remark}
	\begin{lemma}\label{l32}
		Let $a_{mn}$ satisfy the recurrence relation \eqref{eq323}. Then for all $q > (28 + \sqrt{811})/36 \approx 1.569$ there exists $C > 0$ such that 
		\begin{equation}\label{eq324}
			|a_{mn}| \le C\frac{q^{2m + 3n}}{(2m+3n)!}.
		\end{equation}
	\end{lemma}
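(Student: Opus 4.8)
The plan is to prove \eqref{eq324} by induction on the weight $N:=2m+3n$, using the observation already made in the text (via the well-order) that every term on the right-hand side of \eqref{eq323} has strictly smaller weight: $a_{m+1,n-1}$ and $a_{m-2,n+1}$ have weight $N-1$, and $a_{m-1,n}$ has weight $N-2$ (terms with a negative index vanish and only help the estimate). Writing $4m+6n=2N$, the recurrence \eqref{eq323} gives
\[
|a_{mn}|\le\frac{1}{2N(2N+1)}\left(12(m+1)|a_{m+1,n-1}|+\tfrac23(n+1)|a_{m-2,n+1}|+\tfrac1{12}|a_{m-1,n}|\right).
\]
Substituting the inductive bound and factoring out $Cq^{N}/N!$ by means of $q^{N-1}/(N-1)!=(N/q)\,q^{N}/N!$ and $q^{N-2}/(N-2)!=(N(N-1)/q^{2})\,q^{N}/N!$, the desired inequality $|a_{mn}|\le Cq^{N}/N!$ reduces to checking that the amplification factor
\[
h(m,n)=\frac{1}{2(2N+1)}\left(\frac{12(m+1)+\tfrac23(n+1)}{q}+\frac{N-1}{12q^{2}}\right)
\]
is at most $1$.

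Next I would bound $h(m,n)$ by a quantity depending on $N$ only. From $2m+3n=N$ with $m,n\ge0$ we get $m\le N/2$ and $n\le N/3$, hence $12(m+1)+\tfrac23(n+1)\le 6N+12+\tfrac{2N}{9}+\tfrac23$, so $h(m,n)\le H(N)$ with
\[
H(N)=\frac{1}{2(2N+1)}\left(\frac{6N+12+\tfrac{2N}{9}+\tfrac23}{q}+\frac{N-1}{12q^{2}}\right),
\]
and $H(N)\to \tfrac{14}{9q}+\tfrac1{48q^{2}}$ as $N\to\infty$. The function $q\mapsto \tfrac{14}{9q}+\tfrac1{48q^{2}}$ is strictly decreasing on $(0,\infty)$ and equals $1$ exactly when $144q^{2}-224q-3=0$, i.e.\ at $q=(28+\sqrt{811})/36$; thus for $q$ strictly above this threshold the limit is $<1$, and there is $N_{0}=N_{0}(q)$ with $H(N)\le1$ for all $N\ge N_{0}$. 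Since there are only finitely many pairs $(m,n)$ with $2m+3n<N_{0}$, one then chooses $C$ so large that \eqref{eq324} holds for all of them (in particular $C\ge|a_{00}|$), and the induction closes: for $2m+3n=N\ge N_{0}$ one has $|a_{mn}|\le (Cq^{N}/N!)\,h(m,n)\le(Cq^{N}/N!)\,H(N)\le Cq^{N}/N!$.

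The algebraic simplifications and the estimates $m\le N/2$, $n\le N/3$ are routine; the only real content is the choice of the weighted normalization $q^{2m+3n}/(2m+3n)!$ — which forces the three source terms in \eqref{eq323} to drop by a controlled power of $q$ and of the factorial — together with the elementary computation that the limiting amplification factor $\tfrac{14}{9q}+\tfrac1{48q^{2}}$ crosses $1$ precisely at $(28+\sqrt{811})/36$, and this is exactly what fixes the constant appearing in the statement. The mildest obstacle I anticipate is bookkeeping: checking that the conventions for negative indices, the finite base case $N<N_{0}$, and the smallest admissible weights (where $(N-2)!$ first makes sense, e.g.\ $N=2$) are all handled so that the induction on $N$ is genuinely well-founded.
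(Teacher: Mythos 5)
Your proof is correct and takes essentially the same route as the paper: substitute the ansatz $Cq^{2m+3n}/(2m+3n)!$ into \eqref{eq323}, bound the resulting amplification factor uniformly for large weight $N=2m+3n$, observe that its limit $\tfrac{14}{9q}+\tfrac{1}{48q^2}$ drops below $1$ exactly for $q>(28+\sqrt{811})/36$, and absorb the finitely many small-weight pairs into the constant $C$. Your write-up is in fact a bit more explicit than the paper's about the induction being on the weight and about the base case, but the content is identical.
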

	\begin{proof}
		Substituting in \eqref{eq323} this estimation and it is easy to show that for the existence of a constant, it suffices that the inequality
		$$
		 \frac{6(m+1)}{4m+6n+1}\frac{q^{2m+3n-1}}{(2m+3n)!} + \frac{(n+1)q^{2m+3n-1}}{6(4m+6n+1)(2m+3n)!} + \frac{q^{2m+3n-2}}{48(2m+3n)!} \le \frac{q^{2m+3n}}{(2m+3n)!}
		$$
		holds starting from some index $(m,n)$ (in terms of the foregoing ordering). For this, in turn, it suffices to satisfy the inequality
		$$
		\frac{1}{48} + q\left(\frac{3}{2} + \frac{1}{18}\right) < q^2.
		$$
		Solving the quadratic equation, we obtain the required statement.
	\end{proof}
	
	Lemma \ref{l32} allows to define an entire function
	$$h(z,g_2,g_3) = \sum_{m,n = 0}^\infty a_{mn}g_2^m g_3^n z^{4m+6n+1},$$
	where $a_{mn}$ are determined by recurrence relation \eqref{eq323} and initial condition $a_{00} = 1$. We shall prove that $h \equiv \sigma$ for $(g_2,g_3)$ such that $g_2^3 - 27 g_3^2 \ne 0$.
	
	\begin{lemma}
		Let $f$ be a holomorphic function of variables $(z,g_2,g_3)$, defined on a set $\mathbb C \times U$, where $U \subset \mathbb C^2$ is open, satisfying equation \eqref{eq322}. Assume that $f$ is odd in variable $z$. Then $f$ can be represented by series \begin{equation}\label{eq325}
			f(z,g_2,g_3) = \sum_{n = 0}^\infty c_n(g_2,g_3) z^{2n+1},
		\end{equation}
		and in $U$ the recurrence relation
		\begin{equation}\label{eq326}
			(2n+3)(2n+2)c_{n+1} - 12g_3\frac{\partial c_n}{\partial g_2} - \frac{2}{3} g_2^2 \frac{\partial c_n}{\partial g_3} + \frac{1}{12}g_2 c_{n-1},
		\end{equation}
		holds, where $n \ge 0$ (for $n = 0$ we set $c_{n-1} = 0$).
	\end{lemma}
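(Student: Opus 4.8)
The plan is to expand $f$ as a power series in $z$ whose coefficients depend holomorphically on $(g_2,g_3)$, substitute this expansion into \eqref{eq322} (with $\sigma$ replaced by $f$), and then read off \eqref{eq326} by comparing coefficients of equal powers of $z$.

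First I would fix $(g_2,g_3)\in U$ and use that $z\mapsto f(z,g_2,g_3)$ is entire, hence equal to its Taylor series $f(z,g_2,g_3)=\sum_{k\ge 0}b_k(g_2,g_3)z^k$ for all $z$. The Cauchy formula
\[
b_k(g_2,g_3)=\frac{1}{2\pi i}\oint_{|z|=R}\frac{f(z,g_2,g_3)}{z^{k+1}}\,dz
\]
is valid for every $R>0$; since $f$ is jointly holomorphic on $\mathbb C\times U$, differentiation under the integral sign shows that each $b_k$ is holomorphic on $U$, and applying the same formula to $\partial f/\partial g_2$ and $\partial f/\partial g_3$ shows that $\partial b_k/\partial g_j$ is precisely the $k$-th Taylor coefficient in $z$ of $\partial f/\partial g_j$. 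Oddness of $f$ in $z$ forces $b_k=0$ for even $k$, so writing $c_n=b_{2n+1}$ yields the representation \eqref{eq325}. Cauchy estimates give local uniform convergence of this series, and of the series obtained by differentiating it term by term in $z$, on compact subsets of $\mathbb C\times U$, which legitimizes the manipulations below.

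Next I would substitute $f=\sum_{n\ge0}c_n z^{2n+1}$ into \eqref{eq322}, using $\partial^2 f/\partial z^2=\sum_{n\ge1}(2n+1)(2n)c_nz^{2n-1}$, $\partial f/\partial g_j=\sum_{n\ge0}(\partial c_n/\partial g_j)z^{2n+1}$ and $z^2 f=\sum_{n\ge0}c_nz^{2n+3}$. Shifting the summation index in the first sum ($m=n-1$) and in the last sum ($m=n+1$) so that every term carries the factor $z^{2m+1}$, the left-hand side of \eqref{eq322} becomes
\[
\sum_{n\ge0}\Bigl[(2n+3)(2n+2)c_{n+1}-12g_3\tfrac{\partial c_n}{\partial g_2}-\tfrac23 g_2^2\tfrac{\partial c_n}{\partial g_3}+\tfrac{1}{12}g_2 c_{n-1}\Bigr]z^{2n+1},
\]
with the convention $c_{-1}=0$. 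Since this series vanishes identically on $\mathbb C\times U$, the uniqueness of power series expansions forces every bracket to vanish, which is exactly the relation \eqref{eq326}.

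The only genuinely delicate point is the interchange of the $g_j$-derivative with the $z$-expansion, i.e. the assertion that the $z$-Taylor coefficients of $\partial f/\partial g_j$ are $\partial c_n/\partial g_j$; I expect this to be the step that needs to be spelled out with care, and it is settled cleanly by the contour-integral representation of $c_n$ together with differentiation under the integral sign (using joint holomorphy of $f$). Everything else — the two index shifts and the collection of the coefficients of $z^{2n+1}$ — is routine bookkeeping.
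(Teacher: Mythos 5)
Your proposal is correct and follows essentially the same route as the paper: expand $f$ in its $z$-Taylor series (valid since $f$ is entire in $z$), justify that the coefficients are holomorphic in $(g_2,g_3)$ and that term-by-term differentiation is legitimate, then substitute into \eqref{eq322} and match coefficients of $z^{2n+1}$. The only difference is that you spell out via the Cauchy integral representation the interchange of $\partial/\partial g_j$ with the expansion, which the paper dismisses as ``easy to see''; your index shifts and the resulting recurrence agree with \eqref{eq326}.
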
 
	\begin{proof}
		Indeed the representability of $f$ by the series follows from entirety of $f$ by $z$. Its coefficients $c_n(g_2,g_3)$ are given by 
		$$
		c_n(g_2,g_3) = \frac{1}{(2n+1)!} \frac{\partial^{2n+1} f}{\partial z^{2n+1}}|_{z = 0}.
		$$
		It is easy to see that the series \eqref{eq325} can be differentiated term-by-term, and therefore we can substitute it in \eqref{eq322}. Collecting the coefficient at $z^{2n+1}$, we obtain \eqref{eq326}.
	\end{proof}
	
	Recurrence relation \eqref{eq326} can be used to prove, that $\sigma$ and $h$ coincide on the domain of the $\sigma$ function. Indeed, if the first terms of their expansions coincide, then these function coincide (note that they are both odd in $z$). Indeed, $\partial \sigma/\partial z |_{z = 0} \equiv \partial h/\partial z |_{z = 0} \equiv 1$. Thus, $h$ is the analytic continuation of the $\sigma$ function to an entire function of variables $(z,g_2,g_3)$. This completes the proof of the following theorem.
	\begin{theorem}[Weierstrass]\label{tWeierSeries}
		The $\sigma$ function is entire and for all $(z,g_2,g_3) \in \mathbb C^3$ equality
		\begin{equation}\label{eq327}
			\sigma(z,g_2,g_3) = \sum_{m,n = 0}^\infty a_{mn}g_2^m g_3^n z^{4m+6n+1}
		\end{equation}
		holds, where coefficients $a_{mn}$ are determined by recurrence relation \eqref{eq323} and initial condition $a_{00} = 1$.
	\end{theorem}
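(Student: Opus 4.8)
The plan is to produce the claimed entire function directly from the two linear PDEs satisfied by $\sigma$, and only afterwards identify it with $\sigma$ on the locus where $\sigma$ is a priori defined. First I would record the two equations \eqref{eq321} and \eqref{eq322}: the former comes from differentiating the homogeneity relation, the latter (Halphen) from the defining differential equation for $\wp$. Any formal power series $f = \sum f_{mnk}\, g_2^m g_3^n z^k$ annihilated by the Euler-type operator in \eqref{eq321} must have $f_{mnk} = 0$ whenever $k \ne 4m+6n+1$, so $f$ collapses to a double series $\sum a_{mn}\, g_2^m g_3^n z^{4m+6n+1}$; substituting this into \eqref{eq322} and matching coefficients yields the recurrence \eqref{eq323}, and the well-order $(m,n)\le(m',n')$ on $\mathbb{Z}_+ \times \mathbb{Z}_+$ shows that \eqref{eq323} determines all $a_{mn}$ from $a_{00}$ alone.

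The crux — and the step I expect to be the main obstacle — is showing that the series obtained with $a_{00}=1$ converges everywhere, i.e.\ defines an entire function of $(z,g_2,g_3)$; this is precisely Lemma \ref{l32}. I would prove the bound $|a_{mn}| \le C q^{2m+3n}/(2m+3n)!$ by induction along the well-order: insert the bound into the right-hand side of \eqref{eq323}, estimate the factors $6(m+1)/(4m+6n+1)$, $(n+1)/(6(4m+6n+1))$ and the remaining constant crudely, and observe that the induction closes as soon as $q$ satisfies $\frac{1}{48} + q\left(\frac{3}{2} + \frac{1}{18}\right) < q^2$, which holds for $q > (28+\sqrt{811})/36$; the constant $C$ is then enlarged to absorb the finitely many initial indices. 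This produces an entire function $h(z,g_2,g_3) = \sum a_{mn}\, g_2^m g_3^n z^{4m+6n+1}$.

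It remains to verify $h \equiv \sigma$ on the set $\{g_2^3 - 27 g_3^2 \ne 0\}$, after which entireness of $h$ exhibits it as the analytic continuation of $\sigma$ and \eqref{eq327} follows. Both $h$ and $\sigma$ are odd in $z$ and holomorphic in all three variables on that open set, and both satisfy \eqref{eq322} (classically for $\sigma$, by construction for $h$). Expanding either as $f = \sum_{n\ge 0} c_n(g_2,g_3)\, z^{2n+1}$ and substituting into \eqref{eq322} gives the recurrence \eqref{eq326}, which determines every $c_n$ from $c_0$; since $c_0 = \partial f/\partial z|_{z=0}$ and $\partial\sigma/\partial z|_{z=0} = \partial h/\partial z|_{z=0} = 1$, the expansions agree termwise, hence $h = \sigma$ there. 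The subtlety to keep in mind is exactly that $\sigma$ is only guaranteed holomorphic off the discriminant locus, so the uniqueness argument via \eqref{eq326} must be carried out on that open set rather than near the excluded points; the already-constructed entire function $h$ then supplies the continuation across them, which is what makes the whole detour necessary.
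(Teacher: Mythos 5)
Your proposal is correct and follows essentially the same route as the paper: reduce via the Euler equation \eqref{eq321} to a double series, extract the recurrence \eqref{eq323} from Halphen's equation \eqref{eq322}, establish entirety through the coefficient bound of Lemma \ref{l32} with the same threshold $q > (28+\sqrt{811})/36$, and identify the resulting entire function with $\sigma$ off the discriminant locus by the uniqueness argument based on \eqref{eq326} and the normalization $\partial\sigma/\partial z|_{z=0}=1$. No substantive differences to report.
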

\end{appendices}
\printbibliography
\end{document}